\newtheorem{theorem}{Theorem}[section]
\newtheorem{corollary}[theorem]{Corollary}
\newtheorem{lemma}[theorem]{Lemma}
\newtheorem{mainthm}{Theorem}
\theoremstyle{definition}
\newtheorem{definition}[theorem]{Definition}
\newtheorem{remark}[theorem]{Remark}
\newtheorem{example}[theorem]{Example}
\newcommand{\R}{\mathbb{R}}
\newcommand{\N}{\mathbb{N}}
\newcommand{\C}{\mathbb{C}}
\newcommand{\Z}{\mathbb{Z}}
\newcommand{\T}{\mathbb{T}}
\begin{document}

\title[Homotopical Stable Ranks]{Homotopical Stable Ranks for Certain $C^{\ast}$-algebras associated to Groups}
\author[Anshu Nirbhay, Prahlad Vaidyanathan]{Anshu Nirbhay, Prahlad Vaidyanathan}
\address{Department of Mathematics\\ Indian Institute of Science Education and Research Bhopal\\ Bhopal ByPass Road, Bhauri, Bhopal 462066\\ Madhya Pradesh. India.}
\email{ansh@iiserb.ac.in, prahlad@iiserb.ac.in}
\date{}
\begin{abstract}
	We study the general and connected stable ranks for $C^{\ast}$-algebras. We estimate these ranks for certain $C(X)$-algebras, and use that to do the same for certain group $C^{\ast}$-algebras. Furthermore, we also give estimates for the ranks of crossed product $C^{\ast}$-algebras by finite group actions with the Rokhlin property.
\end{abstract}
\subjclass[2010]{Primary 46L85; Secondary 46L80}
\keywords{Stable Rank, Nonstable K-theory, C*-algebras}
\maketitle

A noncommutative dimension (or rank) is a numerical invariant associated to a $C^{\ast}$-algebra that is meant to generalize the notion of Lebesgue covering dimension for topological spaces. First introduced by Rieffel in \cite{rieffel}, these ranks have grown to play an important role in $C^{\ast}$-algebra theory. In particular, algebras having low rank often enjoy regularity properties that are important in and of themselves, and in the context of the Elliott classification program.

Among the various notions of noncommutative dimension that now exist, we are interested in two such - the general stable rank (gsr), and the connected stable rank (csr). Introduced by Rieffel in his original paper, both these ranks are closely related to K-theory. As described below, these ranks not only control the behaviour of certain long exact sequences associated to K-theory, they are also homotopy invariant (and are hence collectively termed \emph{homotopical} stable ranks). This last property is crucial, and makes the study of these ranks different from those of other dimension theories.

The goal of this paper is to understand the behaviour of these ranks under certain natural constructions. We begin with group $C^{\ast}$-algebras associated to certain nilpotent groups. By a theorem of Packer and Raeburn \cite{packer}, these algebras may be expressed as the algebra of sections of a continuous field of $C^{\ast}$-algebras. Therefore, in order to estimate the ranks of these algebras, we are led to our first main theorem.

\begin{mainthm}\label{mainthm: fields}
Let $X$ be a compact metric space of finite covering dimension $N$, and let $A$ be a $C(X)$-algebra. Then
\[
csr(A) \leq \sup\{ csr(C(\T^N)\otimes A(x)) : x \in X\}.
\]
\end{mainthm}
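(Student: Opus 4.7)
The plan is to proceed by induction on $N = \dim X$, combining the behaviour of the connected stable rank under short exact sequences with a standard dimension-theoretic decomposition of $X$.

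For the base case $N=0$, the space $X$ is totally disconnected, so a $C(X)$-algebra is, up to an appropriate limit/partition-of-unity, built from its fibers in a nearly trivial way. The known bound $csr(A) \leq \sup_x csr(A(x))$ then applies, matching the right-hand side since $\T^0$ is a point.

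For the inductive step, given $x_0 \in X$ I would pick a small open neighborhood $U$ of $x_0$ whose boundary has $\dim \partial U \leq N-1$ (always possible in a finite-dimensional compact metric space), yielding
\[
0 \to C_0(U) \cdot A \to A \to A|_{X \setminus U} \to 0.
\]
Iteratively shrinking $U$ and patching via a finite cover of multiplicity $\leq N+1$ (available by covering dimension $N$), one writes $A$ as a repeated extension of simpler pieces. For the ideal $C_0(U) \cdot A$, approximate local triviality of a $C(X)$-algebra identifies it, up to a clutching argument on the boundary of a cube covering $\overline U$, with something of the form $C(\T^N) \otimes A(x_0)$; this contributes $csr(C(\T^N) \otimes A(x_0))$ to the bound. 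The quotient $A|_{X \setminus U}$ sits over a base of dimension $\leq N-1$ (after the iteration), so the inductive hypothesis gives a bound in terms of $csr(C(\T^{N-1}) \otimes A(x))$, which is monotonically dominated by the target. The short exact sequence inequality for csr (governed also by $gsr$ of the quotient, which should itself be controlled by a companion inductive bound) then concludes.

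The main obstacle is the failure of local triviality of a general $C(X)$-algebra: one cannot simply replace $A|_U$ by $C(U) \otimes A(x_0)$, but only approximate it in the Kasparov sense. Controlling these approximations while simultaneously tracking the dimensional contribution --- so that the final bound involves $C(\T^N)$ rather than $C(\T^{N+1})$ --- is the delicate step. I expect this to rely on the homotopy invariance of csr and on its insensitivity to contractible factors like $C([0,1])$, so that only the boundary identifications of each cube contribute genuine circle factors and the total count stays at exactly $N$.
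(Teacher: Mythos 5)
Your base case is correct and matches the paper's. The inductive step, however, has a genuine gap at its core. First, the claim that ``approximate local triviality'' lets you identify the ideal $C_0(U)\cdot A$ (up to a clutching argument) with $C(\T^N)\otimes A(x_0)$ is not available: a general $C(X)$-algebra is not locally trivial in any sense strong enough for this --- only upper semicontinuity of $x\mapsto\|a(x)\|$ holds, and the fibers $A(y)$ for $y$ near $x_0$ need not be isomorphic to $A(x_0)$ at all. You flag this as the delicate step, but you offer no mechanism to overcome it, and none exists in this form. Second, the dimension bookkeeping does not close: $X\setminus U$ still has covering dimension $N$ in general (only $\partial U$ drops to $N-1$), so the quotient $A|_{X\setminus U}$ is not within reach of the inductive hypothesis; iterating the extension merely pushes the entire difficulty onto the ideal pieces $C_0(U_i)\cdot A$, whose connected stable rank is not controlled by the fibers via any a priori inequality (the paper explicitly warns that the homotopical ranks of an ideal bear no general relation to those of the ambient algebra).

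The mechanism that actually makes the induction work --- and the true source of the $\T^N$ --- is different. Upper semicontinuity is used not to trivialize the algebra locally but to propagate the pointwise solution: given $v\in Lg_n(A)$ and $S\in GL_n^0(A(x))$ with $Sv(x)=e_n(x)$, one lifts $S$ to $S_x\in GL_n^0(A)$ and obtains $\pi_{\overline{U_x}}(S_xv-e_n)=0$ on a closed neighbourhood, with no structural identification of $A$ over $U_x$ needed. One then covers $X$ by finitely many closed sets $V_i$ whose successive overlaps $W_j$ have dimension $\leq N-1$, and patches the local solutions two at a time through the pullback $A(V_1\cup V_2)\cong A(V_1)\oplus_{A(W_1)}A(V_2)$. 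The obstruction to patching is governed by $\mathrm{inj}_0$ and $\mathrm{surj}_1$ of the overlap algebra $A(W_1)$, and these are dominated by $csr(\T A(W_1))$; the inductive hypothesis applied to the $C(W_1)$-algebra $\T A(W_1)$ bounds this by $\sup_x csr(\T^{(N-1)+1}A(x))$. Thus the circle factors accumulate one per dimension through the obstruction theory on the lower-dimensional overlaps, not through any clutching identification of the local pieces with trivial fields.
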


Here, $A(x)$ denotes the fiber of $A$ at a point $x$ in $X$, and $\T^k$ denotes the $k$-fold product of the unit circle $S^1$. As mentioned above, \cref{mainthm: fields} leads to our next main result.

\begin{mainthm}\label{mainthm: groups}
Let $G$ be a discrete group that is a central extension $0 \to N\to G\to Q \to 0$, where $N$ is a finitely generated abelian group of rank $n$, and $Q$ is a free abelian group of rank $m$. Then
\[
csr(C^{\ast}(G)) \leq \biggl\lceil \frac{n+m}{2}\biggr\rceil + 1
\]
where $\lceil x\rceil$ denotes the least integer $\geq x$.
\end{mainthm}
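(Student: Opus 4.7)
The proof will combine the Packer--Raeburn structure theorem with \cref{mainthm: fields}. Since $N$ is central in $G$, the theorem of Packer and Raeburn allows one to realize $C^{\ast}(G)$ as the algebra of continuous sections of a $C^{\ast}$-bundle over the Pontryagin dual $X := \widehat{N}$, where the fiber at $\chi \in X$ is the twisted group $C^{\ast}$-algebra $C^{\ast}(Q, \omega_{\chi})$ for a suitable $2$-cocycle $\omega_{\chi} \in Z^{2}(Q, \T)$. Writing $N \cong \Z^{n} \oplus F$ for a finite abelian group $F$, the dual $\widehat{N} \cong \T^{n} \times \widehat{F}$ is a compact metric space of covering dimension exactly $n$, placing us squarely in the setting of \cref{mainthm: fields}.

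Applying \cref{mainthm: fields} to the $C(X)$-algebra $C^{\ast}(G)$ yields
\[
csr(C^{\ast}(G)) \leq \sup_{\chi \in X} csr\bigl(C(\T^{n}) \otimes C^{\ast}(Q, \omega_{\chi})\bigr).
\]
Since $C(\T^{n}) \cong C^{\ast}(\Z^{n})$, and since tensor products of (twisted) group $C^{\ast}$-algebras of discrete abelian groups are themselves twisted group $C^{\ast}$-algebras of the product group, each term on the right can be identified with $C^{\ast}(\Z^{n+m}, \widetilde{\omega}_{\chi})$, where $\widetilde{\omega}_{\chi}$ is the inflation of $\omega_{\chi}$ along the projection $\Z^{n} \times Q \to Q$.

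Thus the theorem reduces to the claim that for every $2$-cocycle $\omega$ on $\Z^{k}$, the twisted group algebra $C^{\ast}(\Z^{k}, \omega)$ satisfies $csr \leq \lceil k/2 \rceil + 1$. This is the step I expect to be the main obstacle. In the untwisted case this is Rieffel's classical bound $csr(C(\T^{k})) \leq \lceil k/2 \rceil + 1$. For a general cocycle, my plan is to proceed by induction on $k$: decomposing $\Z^{k} \cong \Z^{k-1} \times \Z$ and restricting $\omega$, one realizes $C^{\ast}(\Z^{k}, \omega)$ as a crossed product $C^{\ast}(\Z^{k-1}, \omega') \rtimes_{\alpha} \Z$ of a lower-rank twisted algebra, after which Rieffel's estimate of $csr$ under crossed products by $\Z$ allows the induction to close. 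Uniformity of the resulting estimate in $\chi$ is automatic, since the bound depends only on the rank $k = n + m$.
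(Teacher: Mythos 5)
Your first half follows the paper exactly: Packer--Raeburn realizes $C^{\ast}(G)$ as a continuous $C(\widehat{N})$-algebra with fibers $C^{\ast}(Q,\omega)$, $\dim\widehat{N}=n$, and \cref{mainthm: fields} reduces everything to bounding $csr(C(\T^{n})\otimes C^{\ast}(Q,\omega))$, equivalently $csr(C^{\ast}(\Z^{k},\omega))\leq\lceil k/2\rceil+1$ for an arbitrary cocycle on $\Z^{k}$. The gap is in your plan for this reduced claim, which you yourself flag as the main obstacle. An induction on $k$ via $C^{\ast}(\Z^{k},\omega)\cong C^{\ast}(\Z^{k-1},\omega')\rtimes_{\alpha}\Z$ cannot close with the estimates you invoke: Rieffel's crossed-product-by-$\Z$ bound is $tsr(A\rtimes_{\alpha}\Z)\leq tsr(A)+1$ (and the derived $csr$ bounds cost at least one per step), so naive iteration yields a bound growing like $k+O(1)$, not $\lceil k/2\rceil+1$. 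The factor-of-two saving in the untwisted case is a genuinely topological phenomenon (Nistor's $csr(C(X))\leq\lceil\dim X/2\rceil+1$), and to transport it through the induction you would need a priori control of $tsr(C^{\ast}(\Z^{k-1},\omega'))$ of the form $\lfloor(k-1)/2\rfloor+1$ for \emph{every} cocycle $\omega'$ --- a nontrivial statement about arbitrary (in general non-simple) noncommutative tori that your sketch neither proves nor cites, and which is certainly not ``automatic.''

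The paper closes this step differently, and you should compare. It applies the $C(X)$-algebra theorem a \emph{second} time: for $B=C^{\ast}(Q,\omega)$ it forms the symmetrizer subgroup $Z_{\omega}\leq Q$, which is central, so Packer--Raeburn fibers $B$ (and hence $\T^{n}B$) over $\widehat{Z_{\omega}}$, a space of dimension at most $m$. By Packer's identification of $\widehat{Z_{\omega}}$ with the primitive spectrum together with Dauns--Hofmann, the fibers are \emph{simple} twisted group algebras of the free abelian group $Q/Z_{\omega}$, i.e.\ either $\C$ or simple noncommutative tori $A_{\theta}$. The dichotomy then does all the work: $csr(\T^{\ell}A_{\theta})\leq 2$ for simple $A_{\theta}$ (Rieffel), and $csr(C(\T^{\ell}))\leq\lceil\ell/2\rceil+1$ (Nistor). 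This is where the $\lceil(n+m)/2\rceil+1$ actually comes from; simplicity of the fibers, not a crossed-product induction, supplies the improvement over the linear-in-$k$ bound. If you want to salvage your route, you would have to import the known structure theory of noncommutative tori (simple ones have $tsr=1$, non-simple ones fiber over $\widehat{Z_{\omega}}$), at which point you are essentially reproducing the paper's argument.
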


We then turn to crossed product $C^{\ast}$-algebras by finite groups. We begin by giving an estimate for the connected stable rank of a crossed product $C^{\ast}$-algebra provided the underlying algebra has topological stable rank one (\cref{thm: csr_tsr_1}). However, in the case where the action has the Rokhlin property, we were able to obtain stronger estimates.

\begin{mainthm}\label{mainthm: rokhlin}
Let $\alpha : G\to \text{Aut}(A)$ be an action of a finite group $G$ on a separable, unital $C^{\ast}$-algebra $A$ with the Rokhlin property. Then
\[
csr(A\rtimes_{\alpha} G) \leq \biggl\lceil \frac{csr(A) - 1}{|G|}\biggr\rceil + 1
\]
and
\[
gsr(A\rtimes_{\alpha} G) \leq \biggl\lceil \frac{gsr(A) - 1}{|G|}\biggr\rceil + 1.
\]
In particular, if $csr(A) = 1$ or $gsr(A) = 1$, then the same is true for $A\rtimes_{\alpha} G$.
\end{mainthm}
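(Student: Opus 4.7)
The plan is to adapt the Osaka--Phillips strategy, which establishes the analogous inequality for the topological stable rank. The Rokhlin projections $\{e_g\}_{g\in G}\subset A$ behave, up to arbitrarily small tolerance, as a system of matrix units in $A\rtimes_\alpha G$: fixing $p := e_1$, the corner $p(A\rtimes_\alpha G)p$ is approximately $pAp$ (conjugation by $p$ kills the off-diagonal $u_g$-components), and the canonical unitaries $u_g\in A\rtimes_\alpha G$ implement Murray--von Neumann equivalences $p\sim e_g$. Thus, on any given finite subset and within any given tolerance, $A\rtimes_\alpha G$ is approximately isomorphic to $M_{|G|}(pAp)$. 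Combined with the standard Rieffel-type scaling for matrix algebras, this should yield the stated bounds.

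For the $csr$ inequality, set $n := \lceil (csr(A)-1)/|G|\rceil + 1$ and take two tuples $\mathbf{x},\mathbf{y}\in Lg_n(A\rtimes_\alpha G)$ that I want to connect. Under the approximate identification $A\rtimes_\alpha G \approx M_{|G|}(pAp)$, they correspond (up to a small perturbation) to generator $n|G|$-tuples in $pAp$. Since $n|G|\geq csr(A)$ and $csr$ is well-behaved under passage to full corners, a path in $Lg_{n|G|}(pAp)$ connecting the corresponding tuples exists; transferring it back through the approximate identification, followed by a reindexing/perturbation step, converts it into a genuine path in $Lg_n(A\rtimes_\alpha G)$ connecting $\mathbf{x}$ and $\mathbf{y}$. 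The $gsr$ bound follows the same outline, replacing path-connectedness of $Lg_n$ with transitivity of the $GL_n(-)_0$-action on $Lg_n$, and using $gsr(A)$ in place of $csr(A)$.

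The main obstacle is that the Rokhlin projections exist only approximately (strictly, they live in a central sequence algebra), so the identification $A\rtimes_\alpha G \approx M_{|G|}(pAp)$ is not an honest isomorphism. The technical core of the argument is therefore an $\varepsilon$--$\delta$ bookkeeping exploiting the openness of $Lg_n$: I must choose the Rokhlin tolerance fine enough that all generator tuples constructed during the reduction remain in $Lg_n$ after perturbation, and invoke a standard reindexing trick to realize the path (respectively the invertible witnessing transitivity) inside $A\rtimes_\alpha G$ itself rather than in its sequence algebra. A secondary ingredient is the corner inequality $csr(pAp)\leq csr(A)$ (and analogously for $gsr$) for full projections $p$, which I expect to be available from the general theory of these homotopical ranks developed in the preliminary sections.
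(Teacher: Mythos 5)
There is a genuine gap, and it sits exactly where you flag a ``secondary ingredient'': the corner inequality $csr(pAp)\leq csr(A)$ for a full projection $p$ is \emph{not} available from the general theory --- it is false. The homotopical stable ranks are precisely the ranks that fail to pass to hereditary subalgebras (the paper's Remark on ideals already warns of this), and full corners can strictly \emph{increase} $csr$. Concretely, let $A=M_2(C(\T^5))$ and $p=e_{11}$, a full projection. Then $csr(pAp)=csr(C(\T^5))=\lceil 5/2\rceil+1=4$ (Nistor's bound, attained by Nica's theorem since $H^5(\T^5)\neq 0$), while $csr(A)\leq\lceil (4-1)/2\rceil+1=3$ by Rieffel's matrix formula. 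So the step ``$n|G|\geq csr(A)$ and $csr$ is well-behaved under passage to full corners, hence a path in $Lg_{n|G|}(pAp)$ exists'' has no justification; worse, the Rokhlin projection $p=e_1$ need not even be full in $A$ (take $A=C(G)$ with the translation action, where $e_1Ae_1=\C$). The rest of your outline (approximate matrix units, $\varepsilon$--$\delta$ bookkeeping via openness of $Lg_n$, Rieffel's scaling for $M_{|G|}$) is plausible in the Osaka--Phillips spirit, but without control of $csr(pAp)$ the reduction proves nothing.

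The paper avoids corners altogether. By Takai-type duality, $A\rtimes_\alpha G\cong\bigl(A\otimes\mathcal{B}(\ell^2(G))\bigr)^{\alpha\otimes\mathrm{Ad}(\lambda)}=M_{|G|}(A)^{\beta}$, where $\beta$ again has the Rokhlin property by Gardella's tensor-product result. The key substitute for your corner inequality is the fixed-point lemma: Gardella's theorem supplies unital, completely positive, contractive, asymptotically multiplicative maps $\psi_j\colon B\to B^{\beta}$ that asymptotically fix $B^{\beta}$, and pushing an invertible $T\in GL_n(B)$ with $Tv=e_n$ through $\psi_j$ (together with closedness of the $GL_n^0$-orbits) gives $csr(B^{\beta})\leq csr(B)$ and $gsr(B^{\beta})\leq gsr(B)$. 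Applying this to $B=M_{|G|}(A)$ and then Rieffel's formula $csr(M_{|G|}(A))\leq\lceil (csr(A)-1)/|G|\rceil+1$ yields the theorem. If you wish to salvage your route, you would need to prove a corner estimate specifically exploiting the approximate centrality of the Rokhlin projections, which in effect amounts to re-deriving this fixed-point lemma.
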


Under the same hypotheses as above, Osaka and Phillips have shown in \cite{osaka} that if $A$ has either stable rank one or real rank zero, then the same is true for the crossed product $C^{\ast}$-algebra. Therefore, \cref{mainthm: rokhlin} may be thought of as more evidence that crossed products by Rokhlin actions preserves low dimension.

\section{Preliminaries} 

\subsection{Stable ranks}

Let $A$ be a unital $C^{\ast}$-algebra and $n$ be a natural number. A vector $\underline{a} := (a_1,a_2,\ldots, a_n) \in A^n$ is said to be \emph{left unimodular} if there exists a vector $(b_1,b_2,\ldots, b_n) \in A^n$ such that $\sum_{i=1}^n b_ia_i = 1$. We write $Lg_n(A)$ for the set of all left unimodular vectors. Note that $GL_n(A)$, the set of all invertible elements in $M_n(A)$, acts on $Lg_n(A)$ by left multiplication. 

\begin{definition}\label{defn: gsr}
Let $A$ be a unital $C^{\ast}$-algebra. The \emph{general stable rank (gsr)} of $A$ is the least integer $n \geq 1$ such that $GL_m(A)$ acts transitively on $Lg_m(A)$ for each $m\geq n$.
\end{definition}
If no such number $n$ exists, we simply write $gsr(A) = +\infty$. Furthermore, if $A$ is a non-unital $C^{\ast}$-algebra, then the general stable rank of $A$ is simply defined as that of $A^+$, the minimal unitization of $A$.  To avoid repetition, we adopt these same convention in the definitions of connected and topological stable ranks below.

\cref{defn: gsr} seems somewhat opaque, but it has a K-theoretic explanation. In what follows, we will assume $A$ is both unital, and has the
invariant basis number property \cite[Definition 1.36]{magurn}, so that we may make sense of the rank of certain modules over $A$. Now suppose $M$ is an $A$-module
such that $M\oplus A^s \cong A^{s+m}$ for integers $s, m > 0$, then we wish to know when we can conclude that $M\cong A^m$. Therefore, we consider the somewhat simpler situation of a finitely generated projective $A$-module $P$ together with an isomorphism
\[
f: P\oplus A\xrightarrow{\cong} A^n
\]
and we ask when $P\cong A^{n-1}$. Setting $Q := f(P\oplus \{0\})$ and $\underline{a} := f((0,1))$, we see that $Q\cong P$ and
\[
Q\oplus \underline{a}A = A^n
\]
It turns out that $\underline{a}$ is a left unimodular vector, and that $P\cong A^{n-1}$ if and only if there is an invertible $T \in GL_n(A)$ such that $T(\underline{a}) = e_n$, where $e_n = (0, 0, \ldots, 1) \in A^n$ \cite[Proposition 4.14]{magurn}. Hence, the general stable rank of $A$ determines the least rank at which a stably free projective module is forced to be free. 

Let $GL^0_n(A)$ denote the connected component of the identity element in $GL_n(A)$. Observe that $GL_n^0(A)$ is a normal subgroup of $GL_n(A)$, and hence acts on $Lg_n(A)$ as well.

\begin{definition}
Let $A$ be a unital $C^*$-algebra. Then the \emph{connected stable rank (csr)} of $A$ is the least integer $n \geq 1$ such that $GL_m^0(A)$ acts transitively on $Lg_m(A)$ for all $m\geq n$.
\end{definition}

This definition is, if possible, even more mysterious than the previous one. To understand its usefulness, we state a result due to Rieffel: Let $A$ be a unital $C^{\ast}$-algebra and $\theta_n : GL_n(A) \to GL_{n+1}(A)$ denote the natural inclusion
\[
a \mapsto \begin{pmatrix}
a & 0 \\
0 & 1
\end{pmatrix}.
\]
This induces a sequence of groups
\[
\ldots \to \frac{GL_{n-1}(A)}{GL_{n-1}^0(A)} \to \frac{GL_n(A)}{GL_n^0(A)} \to \ldots
\]
whose limit is $K_1(A)$.

\begin{theorem}\cite[Proposition 2.6 and Theorem 2.9]{rieffel2}\label{thm: k_1}
If $n\geq csr(A)$, then the map
\[
GL_{n-1}(A) \to K_1(A)
\]
is surjective. Furthermore, if $n\geq \max\{csr(A), gsr(C(\T)\otimes A)\}$, then
\[
\frac{GL_{n-1}(A)}{GL_{n-1}^0(A)} \cong K_1(A).
\]
\end{theorem}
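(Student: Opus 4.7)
The plan is to establish surjectivity first and injectivity second, both via a common reduction trick. The trick is: if $u \in GL_m(A)$ has last column equal to $e_m$, then $u = \begin{pmatrix} B & 0 \\ c & 1 \end{pmatrix}$ for some $B \in GL_{m-1}(A)$, and multiplying on the left by $\begin{pmatrix} I_{m-1} & 0 \\ -c & 1 \end{pmatrix} \in GL_m^0(A)$ brings $u$ into the form $\theta_{m-1}(B)$. Thus $u$ and $\theta_{m-1}(B)$ represent the same class in $GL_m(A)/GL_m^0(A)$, and hence also in $K_1(A)$.

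For surjectivity, I will start with a class $[u] \in K_1(A)$ represented by some $u \in GL_m(A)$ with $m \geq n$. Its last column $u e_m$ lies in $Lg_m(A)$, and since $m \geq csr(A)$, I can find $T \in GL_m^0(A)$ with $T(u e_m) = e_m$. Applying the reduction trick to $Tu$ produces $B \in GL_{m-1}(A)$ with $[u] = [Tu] = [B]$ in $K_1(A)$. Iterating this descent drives $m$ down to $n-1$, giving surjectivity.

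For injectivity, an easy induction reduces the claim to the following: if $w \in GL_{m-1}(A)$ and $w \oplus 1 \in GL_m^0(A)$ with $m \geq gsr(C(\T) \otimes A)$, then $w \in GL_{m-1}^0(A)$. Given a path $\gamma : [0,1] \to GL_m(A)$ from $I_m$ to $w \oplus 1$, I will consider the last-column map $\underline{a}(t) := \gamma(t) e_m$. Since $\underline{a}(0) = \underline{a}(1) = e_m$, this is a continuous loop in $Lg_m(A)$ based at $e_m$, and hence defines an element $\underline{a} \in Lg_m(C(\T) \otimes A)$. Using $m \geq gsr(C(\T) \otimes A)$, I obtain $S \in GL_m(C(\T) \otimes A)$, i.e., a continuous family $S(t) \in GL_m(A)$ with $S(0) = S(1)$, such that $S(t)\underline{a}(t) = e_m$ for all $t$. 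Applying the reduction trick pointwise to $S(t)\gamma(t)$ yields a continuous path $B : [0,1] \to GL_{m-1}(A)$. A short calculation, using the equality $S(0) = S(1)$, shows $B(0) = S(0)_{11}$ and $B(1) = S(0)_{11} w$; then $S(0)_{11}^{-1} B(t)$ is a path from $I_{m-1}$ to $w$ in $GL_{m-1}(A)$, forcing $w \in GL_{m-1}^0(A)$.

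The main obstacle will be the injectivity step, specifically the bookkeeping at the endpoints of $B$. The crucial structural point is that the condition $S(0) = S(1)$ forced by working over $C(\T)$ (as opposed to $C([0,1])$) is precisely what makes the spurious $S(0)_{11}$-factor cancel, isolating $w$ as the ``ratio'' of $B(1)$ to $B(0)$. Without this loop coherence, one would only obtain a path from $S(0)_{11}$ to some $S(1)_{11} w$, which would not pin down the class of $w$; this is why the hypothesis is phrased in terms of $C(\T)\otimes A$ rather than the suspension or the cone.
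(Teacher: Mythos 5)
Your argument is correct. The paper does not prove this statement --- it is quoted from Rieffel (\emph{Proposition 2.6 and Theorem 2.9} of the cited reference) --- and your proof is a faithful, self-contained reconstruction of Rieffel's original argument: the last column of an invertible matrix is left unimodular, $csr$ lets you move it to $e_m$ by an element of $GL_m^0$ and then strip off the last row and column to descend the size, while $gsr(C(\T)\otimes A)$ applied to the loop of last columns of a nullhomotopy is exactly what trivializes the kernel, with the loop condition $S(0)=S(1)$ cancelling the extraneous $B(0)$ factor as you note. All the supporting facts you use (the last column of $u\in GL_m(A)$ lies in $Lg_m(A)$, invertibility of the corner block $B$, path-connectedness of $GL_m^0$, and the identification of loops in $Lg_m(A)$ with elements of $Lg_m(C(\T)\otimes A)$) are standard and correctly deployed, so there is nothing to fix.
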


Thus, these ranks together control the nonstable K-theory of a $C^{\ast}$-algebra. Before we proceed, we give one last definition, that of topological stable rank.

\begin{definition}
Let $A$ be a unital $C^{\ast}$-algebra. The \emph{topological stable rank (tsr)} of $A$ is the least integer $n \geq 1$ such that $Lg_n(A)$ is dense in $A^n$.
\end{definition}

As it turns out, if $Lg_n(A)$ is dense in $A^n$, then $Lg_m(A)$ is dense in $A^m$ for all $m\geq n$, which explains the difference between this and the earlier definitions.

\begin{remark}\label{rem: gsr_csr_properties}
We now list some basic properties of these ranks. While the original proofs are scattered through the literature, \cite{nica} is an immediate reference for all these facts.

\begin{enumerate}
\item $gsr(A\oplus B) = \max\{gsr(A),gsr(B)\}$. Analogous statements hold for $csr$ and $tsr$.
\item $gsr(A) \leq csr(A) \leq tsr(A) + 1$.
\item[] Strict inequalities are possible in both cases. In fact, it is possible that $tsr(A) = +\infty$, while $csr(A) < \infty$.
\item For any $n\in \N$,
\[
csr(M_n(A))\leq \biggl\lceil \frac{csr(A)-1}{n}\biggr\rceil + 1, \text{ and } gsr(M_n(A))\leq \biggl\lceil \frac{gsr(A)-1}{n}\biggr\rceil + 1.
\]
Here, $\lceil x\rceil$ refers to the least integer greater than or equal to $x$.
\item If $\pi : A\to B$ is a surjective $\ast$-homomorphism, then
\[
csr(B)\leq \max\{csr(A),tsr(A)\}, \text{ and } gsr(B)\leq \max\{gsr(A),tsr(A)\}.
\]
\item Furthermore, if $\pi :A\to B$ is a split epimorphism (ie. there is a $\ast$-homomorphism $s:B\to A$ such that $\pi\circ s = \text{id}_B$), then
\[
csr(B)\leq csr(A), \text{ and } gsr(B)\leq gsr(A).
\]
\item If $0\to J\to A\to B\to 0$ is an exact sequence of C*-algebras, then
\[
csr(A) \leq \max\{csr(J),csr(B)\}, \text{ and } gsr(A) \leq \max\{gsr(J),csr(B)\}.
\]
It is worth mentioning here that when $J$ is an ideal of $A$, then there is, a priori, no relation between the homotopical stable ranks of $A$ and those of $J$.
\item Let $\{A_i : i\in J\}$ be an inductive system of C*-algebras with $A := \lim A_i$. Then
\[
csr(A) \leq \liminf_i csr(A_i), \text{ and } gsr(A) \leq \liminf_i gsr(A_i).
\]
\item If $gsr(A) = 1$ (and hence if $csr(A) = 1$), then $A$ is stably finite. Conversely, if $gsr(A)\leq 2$ and $A$ is finite, then $gsr(A) = 1$.
\item If $csr(A) = 1$, then $K_1(A) = 0$. The converse is true if $tsr(A) = 1$.
\item If $tsr(A) = 1$, then $A$ has cancellation of projections, so $gsr(A) = 1$.
\item If $A$ and $B$ are homotopy equivalent (in the category of $C^{\ast}$-algebras), then $gsr(A) = gsr(B)$ and $csr(A) = csr(B)$.
\end{enumerate}
\end{remark}

\subsection{$C(X)$-algebras}

We now describe a class of $C^{\ast}$-algebras that we will focus on for the first part of the paper. From now on, $X$ will always be a compact Hausdorff space unless otherwise stated.

\begin{definition}\cite[Definition 1.5]{kasparov}
A unital $C^{\ast}$-algebra $A$ is said to be a \emph{$C(X)$-algebra} if there exists a unital $\ast$-homomorphism $\Phi \colon C(X) \longrightarrow Z(A)$, where $Z(A)$ denotes the centre of $A$.
\end{definition}

In other words, $A$ is a $C(X)$-module, so if $f\in C(X)$ and $a\in A$, we simply write $fa$ for $\Phi(f)(a)$. Let $Y \subset X$ be closed and let $C(X,Y)$ denote the set of all functions in $C(X)$ vanishing on $Y$. Then, $C(X,Y)A$ is an ideal in $A$ by the Cohen factorization theorem \cite[Theorem 4.6.4]{brown_ozawa}, so we write $A(Y) := A/C(X,Y)A$ for the corresponding quotient, and $\pi_Y : A\to A(Y)$ for the quotient map. Furthermore, if $Z\subset Y$ is another closed subset of $X$, then we write $\pi^Y_Z : A(Y)\to A(Z)$ for the natural quotient map satisfying $\pi_Z = \pi^Y_Z\circ \pi_Y$.

If $Y = \{x\}$ is a singleton set, then $A(x) := A(\{x\})$ is called the fiber of $A$ at $x$, and we write $\pi_x$ for the corresponding quotient map. For $a\in A$, we write $a(x)$ for $\pi_x(a)$.  For each $a\in A$, we have a map $\Gamma_a : X\to \R$ given by $x\mapsto \|a(x)\|$. This map is always upper semi-continuous \cite[Lemma 2.3]{kirchberg}. We say that $A$ is a \emph{continuous} $C(X)$-algebra if $\Gamma_a$ is continuous for each $a\in A$.

When $A$ is a $C(X)$-algebra, we will often have reason to consider other $C(X)$-algebras constructed from $A$. At that time, we will need the following remark.

\begin{remark}\label{rem: kirchberg}
Let $X$ be a compact, Hausdorff space and let $A$ be a $C(X)$-algebra. If $B$ is a nuclear $C^{\ast}$-algebra, then $A\otimes B$ carries a natural action of $C(X)$ given on elementary tensors by $f\cdot (a\otimes b) := (fa)\otimes b$. This makes $A\otimes B$ a $C(X)$-algebra, whose fiber at a point $x\in X$ is given by $A(x)\otimes B$.
\end{remark}

Finally, one fact that plays a crucial role for us is that a $C(X)$-algebra may be patched together from quotients in the following way: Let $B, C$ and $D$ be $C^{\ast}$-algebras and $\beta:  B\to D$ and $\gamma : C\to D$ be $\ast$-homomorphisms. The pullback of this system is defined to be
\[
A = B \oplus_{D} C = \{(b,c) \in B\oplus C:  \beta(b) = \gamma(c)\}.
\]
This is described by a diagram
\[
\xymatrix{
A \ar[d]_{\delta} \ar[r]^{\alpha} & B \ar[d]^{\beta}\\ 
C  \ar[r]_{\gamma} & D
}
\]
where $\alpha(b,c) = b$ and $\delta(b,c) = c$.

\begin{lemma}\cite[Lemma 2.4]{mdd_finite}\label{lem: pullback_cx_algebra}
Let $X$ be a compact, Hausdorff space and $Y$ and $Z$ be two closed subsets of $X$ such that $X = Y \cup Z$. If $A$ is a $C(X)$-algebra, then $A$ is isomorphic to the following pullback.
\[
\xymatrix{
A \ar[d]_{\pi_Z} \ar[r]^{\pi_Y} & A(Y) \ar[d]^{\pi_{Y \cap Z}^Y}\\ 
A(Z)  \ar[r]_{\pi_{Y \cap Z}^Z} & A(Y \cap Z)
}
\]
\end{lemma}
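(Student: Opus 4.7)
My plan is to show that the canonical $\ast$-homomorphism $\Phi : A \to A(Y)\oplus_{A(Y\cap Z)} A(Z)$, defined by $\Phi(a) := (\pi_Y(a), \pi_Z(a))$, is an isomorphism. It lands in the pullback because $\pi^Y_{Y\cap Z}\circ \pi_Y = \pi_{Y\cap Z} = \pi^Z_{Y\cap Z}\circ \pi_Z$, and it is clearly a $\ast$-homomorphism.

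For injectivity, I would appeal to the standard fact that on any $C(X)$-algebra the family of fiber evaluations $\{\pi_x\}_{x\in X}$ is faithful, i.e.\ $\|a\| = \sup_{x\in X}\|a(x)\|$. If $\Phi(a) = 0$, then in particular $a(x) = 0$ for every $x\in Y$ and every $x\in Z$, hence for every $x \in Y\cup Z = X$, forcing $a = 0$.

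For surjectivity, given a compatible pair $(b,c)$, I would first use surjectivity of $\pi_Y$ to pick any $a_1\in A$ with $\pi_Y(a_1) = b$. Set $c' := c - \pi_Z(a_1) \in A(Z)$. The pullback compatibility $\pi^Y_{Y\cap Z}(b) = \pi^Z_{Y\cap Z}(c)$ forces $\pi^Z_{Y\cap Z}(c') = 0$, so $c'$ lies in $\ker \pi^Z_{Y\cap Z} = C(Z, Y\cap Z)\cdot A(Z)$. By the Cohen--Hewitt factorization theorem (applicable because $C(Z, Y\cap Z)$ has a bounded approximate unit), write $c' = g\cdot d$ with $g \in C(Z, Y\cap Z)$ and $d \in A(Z)$. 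Since $X = Y\cup Z$ is a closed cover and $g$ vanishes on $Y\cap Z$, the function $\tilde{g}\in C(X)$ defined by $\tilde{g}|_Z := g$ and $\tilde{g}|_Y := 0$ is well-defined and continuous by the pasting lemma for closed covers. Choose any lift $\tilde{d}\in A$ of $d$, and set $a := a_1 + \tilde{g}\tilde{d}$. Then $\tilde{g}\tilde{d} \in C(X,Y)A$, so $\pi_Y(a) = b$; and $\pi_Z(a) = \pi_Z(a_1) + g\cdot d = \pi_Z(a_1) + c' = c$. Hence $\Phi(a) = (b,c)$.

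The main obstacle I anticipate is precisely this lifting step in the surjectivity argument: producing a single element of $A$ whose image in $A(Y)$ is $b$ and whose image in $A(Z)$ is $c$. This is where the hypotheses really enter --- the \emph{closedness} of $Y$ and $Z$ is needed for the pasting-lemma extension of $g$ to all of $X$, and the ideal structure of $C(X, Y\cap Z)$ (together with its bounded approximate unit) is needed for the Cohen--Hewitt factorization of $c'$. Neither ingredient is delicate on its own, but arranging the two fiber conditions simultaneously is the one place where the full $C(X)$-structure is used; the rest is bookkeeping.
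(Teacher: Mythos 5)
Your proof is correct, and since the paper simply imports this statement from \cite[Lemma 2.4]{mdd_finite} without proof, there is nothing to diverge from: your argument (injectivity from faithfulness of the fibers, surjectivity by correcting a lift of $b$ with an element of $\ker\pi^Z_{Y\cap Z}$ factored via Cohen--Hewitt and extended by zero across the closed cover) is exactly the standard one. The only step worth flagging as implicitly used is the identification $\ker\pi^Z_{Y\cap Z}=C(Z,Y\cap Z)A(Z)$, i.e.\ the compatibility of iterated restrictions, which is routine and already built into the paper's notation $\pi_{Y\cap Z}=\pi^Z_{Y\cap Z}\circ\pi_Z$.
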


\subsection{Notational conventions} We fix some notation we will use repeatedly throughout the paper: We write $\T^k$ for the $k$-fold product of the circle $\T$. Given a $C^*$-algebra $A$ and a compact Hausdorff space $X$, we identify $C(X) \otimes A$ with $C(X,A)$, the space of continuous $A$-valued functions on $X$. If $X = \T^k$, we simply write $\T^kA$ for $C(\T^k,A)$. We write $\theta_n^A$ for the map $GL_{n-1}(A) \longrightarrow GL_n(A)$ given by 
\[
a\mapsto \begin{pmatrix}
a & 0 \\
0 & 1
\end{pmatrix}.
\]
If there is no ambiguity, we simply write $\theta^A$ for this map. Given a unital $\ast$-homomorphism $\varphi: A \to B$, we write $\varphi_n$ for the induced maps in a variety of situations, such as $M_n(A) \to M_n(B),GL_n(A) \to GL_n(B)$ and $Lg_n(A) \to Lg_n(B)$. Furthermore, when there is no ambiguity, we once again drop the subscript and denote the map by $\varphi$. 

Given two topological spaces $X$ and $Y$, we will write $[X,Y]$ for the set of free homotopy classes of continuous maps between them. If $X$ and $Y$ are pointed spaces, then we write $[X,Y]_{\ast}$ for the set of based homotopy classes of continuous functions based at those distinguished points. Here, we will be concerned with two pointed spaces associated to a unital $C^{\ast}$-algebra $A$: $GL_n(A)$, as a subspace of $M_n(A)$ with base point $I_n$; and $Lg_m(A)$, as a subspace of $A^m$ with base point $e_m$.

\section{$C(X)$-algebras}

The goal of this section is to prove \cref{mainthm: fields}. Given a $C(X)$-algebra, and a point $x\in X$, one often looks to use upper semi-continuity to propagate a given property from the fiber $A(x)$ to a neighbourhood of that point. Together with a compactness argument, one may then be able to ensure that the property holds for the entire $C(X)$-algebra. This is the basic approach to the theorem, the first step of which begins with the following lemma. 

\begin{lemma}\label{lem:open_orbit}
Let $A$ be a unital C*-algebra, $n\in \N$ and $e_n = (0,0,\ldots, 1_A) \in Lg_n(A)$. If $\underline{u} \in A^n$ is such that $\|\underline{u} - e_n\| < 1/n$, then there exists $S \in GL_n^0(A)$ such that $S\underline{u} = e_n$.
\end{lemma}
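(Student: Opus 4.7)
The plan is to produce $S$ explicitly as a product of two visibly path-connected pieces: first rescale the last coordinate of $\underline{u}$ to $1_A$, then use a single unipotent shear to kill the remaining coordinates. Write $\underline{u}=(u_1,\dots,u_n)$. The hypothesis $\|\underline{u}-e_n\|<1/n$ forces, in particular, $\|u_n-1_A\|<1$, so $u_n$ is invertible via the Neumann series and the straight-line path $t\mapsto 1_A+t(u_n-1_A)$ lies entirely in $GL_1(A)$; in particular $u_n^{-1}\in GL_1^0(A)$.

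Accordingly, the first step is to set
\[
S_1:=\mathrm{diag}(1_A,\dots,1_A,u_n^{-1})\in GL_n(A),
\]
which belongs to $GL_n^0(A)$ via the homotopy linearly interpolating the last diagonal entry from $u_n^{-1}$ back to $1_A$. One checks $S_1\underline{u}=(u_1,\dots,u_{n-1},1_A)$.

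The second step is to introduce the unipotent upper-triangular matrix $S_2$ with $1_A$ on the diagonal, entry $-u_i$ in position $(i,n)$ for $i<n$, and zeros elsewhere. Scaling each $-u_i$ to $-tu_i$ as $t$ runs from $1$ to $0$ yields a path of unipotent (hence invertible) matrices connecting $S_2$ to $I_n$, so $S_2\in GL_n^0(A)$. A direct computation gives $S_2S_1\underline{u}=e_n$, and the element $S:=S_2S_1\in GL_n^0(A)$ is the one required.

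There is essentially no obstacle: the lemma reduces to a short explicit computation once the transformation is decomposed as \emph{rescale, then shear}. The precise constant $1/n$ plays no role in this argument — any bound forcing $\|u_n-1_A\|<1$ would suffice — and is presumably chosen with later compactness/covering arguments in the paper in mind.
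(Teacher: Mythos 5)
Your proof is correct and in substance the same as the paper's: your $S=S_2S_1$ is exactly the inverse of the single matrix $T$ (identity except that its last column is $\underline{u}$) that the paper writes down, the paper instead verifying $T\in GL_n^0(A)$ in one stroke via the estimate $\|I_n-T\|\leq n\|\underline{u}-e_n\|<1$, which is precisely where the constant $1/n$ enters. Your observation that any hypothesis forcing $\|u_n-1_A\|<1$ would suffice for the factored argument is accurate.
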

\begin{proof}
Consider
\[
T = \begin{pmatrix}
1 & 0 & 0 & \ldots & 0 & u_1 \\
0 & 1 & 0 & \ldots & 0 & u_2 \\
\vdots & \vdots & \vdots & \vdots & \vdots & \vdots \\
0 & 0 & 0 & \ldots & 1 & u_{n-1}\\
0 & 0 & 0 & \ldots & 0 & u_n
\end{pmatrix}
\]
then
\[
\|I_n - T\| \leq \sum_{i=1}^{n-1} \|u_i\| + \|1_A - u_n\| \leq n\|\underline{u}-e_n\| < 1.
\]
Hence, $T \in GL_n^0(A)$ and $T(e_n) = \underline{u}$, so $S := T^{-1}$ works.
\end{proof}

The proof of \cref{mainthm: fields} is by induction on the covering dimension of the underlying space $X$. The next result is the base case, and works even if the underlying space is not metrizable.

\begin{theorem}\label{thm:zero_dim_case}
Let $X$ be a zero dimensional compact Hausdorff space, and $A$ be a unital $C(X)$-algebra, then
\[
csr(A) \leq \sup\{csr(A(x)) : x\in X\}.
\]
\end{theorem}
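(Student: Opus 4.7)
Let $N := \sup\{csr(A(x)) : x \in X\}$, which we may assume is finite. Fix $m \geq N$ and $\underline{a} \in Lg_m(A)$; the goal is to produce $S \in GL_m^0(A)$ with $S\underline{a} = e_m$. The plan is to solve the problem on each fiber, lift to $A$, and use upper semi-continuity together with the zero-dimensionality of $X$ to patch local solutions into a global one, with a final application of \cref{lem:open_orbit} closing the gap between ``approximately $e_m$'' and ``exactly $e_m$''.

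For each $x \in X$, pick $S_x \in GL_m^0(A(x))$ with $S_x\pi_x(\underline{a}) = e_m$. Writing $S_x$ as a finite product of exponentials in $M_m(A(x))$ and lifting each exponent through the surjection $M_m(A) \twoheadrightarrow M_m(A(x))$ yields $\tilde{S}_x \in GL_m^0(A)$ with $\pi_x(\tilde{S}_x) = S_x$. Set $\underline{a}_x := \tilde{S}_x\underline{a}$, so that $\pi_x(\underline{a}_x) = e_m$; upper semi-continuity of $y \mapsto \|\pi_y(\underline{a}_x) - e_m\|$ then furnishes an open $U_x \ni x$ on which this quantity is strictly less than $1/m$, and zero-dimensionality allows us to shrink $U_x$ to a clopen $V_x$. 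By compactness, extract a finite subcover $V_{x_1},\ldots,V_{x_k}$ and refine it to a disjoint clopen partition $X = W_1 \sqcup \cdots \sqcup W_k$ with $W_i \subseteq V_{x_i}$. The corresponding central projections in $A$ yield an isomorphism $A \cong \bigoplus_i A(W_i)$, and hence $GL_m^0(A) \cong \prod_i GL_m^0(A(W_i))$.

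Since the norm on the $C(W_i)$-algebra $A(W_i)$ is the supremum of fiber norms over $W_i$, we have $\|\pi_{W_i}(\underline{a}_{x_i}) - e_m\| < 1/m$ in $A(W_i)^m$, so \cref{lem:open_orbit} applied inside $A(W_i)$ produces $T_i \in GL_m^0(A(W_i))$ with $T_i\pi_{W_i}(\underline{a}_{x_i}) = e_m$. The element $S \in GL_m^0(A)$ whose $i$-th coordinate under the above decomposition is $T_i\pi_{W_i}(\tilde{S}_{x_i})$ then satisfies $S\underline{a} = e_m$, as desired. The most delicate step is the propagation from a single fiber to a clopen neighborhood; here both the exponential-product expression for $GL_m^0$ and the upper semi-continuity of fiber norms are essential. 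Zero-dimensionality enters only to guarantee clopen refinements of open covers, which in turn forces the direct-sum splitting of $A$; without it, the patching must go through pullback decompositions, which is precisely what the inductive step in \cref{mainthm: fields} will require.
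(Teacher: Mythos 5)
Your proof is correct and follows essentially the same route as the paper's: solve on each fiber, lift through the quotient (the paper also lifts elements of $GL_m^0(A(x))$ to $GL_m^0(A)$, implicitly via the exponential-product argument you spell out), propagate by upper semi-continuity, disjointify the clopen cover using zero-dimensionality, and patch through the direct-sum decomposition $A \cong \bigoplus_i A(W_i)$ with \cref{lem:open_orbit} finishing on each summand. The only cosmetic difference is that the paper applies \cref{lem:open_orbit} on the sets $U_x$ before refining to the disjoint partition, while you apply it after; both are equally valid.
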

\begin{proof} If $\sup\{csr(A(x)): x \in X\} = +\infty$, then there is nothing to prove, so we may assume that $\sup\{csr(A(x)): x \in X\} < \infty$. Fix $n\geq \sup\{csr(A(x)) : x\in X\}$, and $v\in Lg_n(A)$. We want to show that there exists $T\in GL_n^0(A)$ such that $Tv=e_n$.

For any $x\in X, v(x) \in Lg_n(A(x))$, so there exists $S\in GL_n^0(A(x))$ such that $Sv(x) = e_n(x)$. Since the quotient map $\pi_x : A\to A(x)$ is surjective, there exists $T_x \in GL_n^0(A)$ such that $T_x(x) = S$, then
\[
T_x(x)v(x) = e_n(x).
\]
By upper semi-continuity of the map $y \mapsto \|T_x(y)v(y) - e_n(y)\|$, there is a cl-open neighbourhood $U_x$ of $x$ such that, for each $y\in U_x$, we have
\[
\|T_x(y)v(y) - e_n(y)\| < 1/n.
\]
By \cite[Lemma 2.1, (ii)]{mdd_finite},
\[
\|\pi_{U_x}(T_xv) - \pi_{U_x}(e_n)\| < 1/n.
\]
Since $\pi_{U_x}$ is unital, $\pi_{U_x}(e_n) = e_n \in Lg_n(A(U_x))$. So, applying \cref{lem:open_orbit} to $\underline{u}:= \pi_{U_x}(T_xv)$, there exists $\widetilde{S_x} \in GL_n^0(A(U_x))$ such that
\[
\widetilde{S_x}\pi_{U_x}(T_xv) = \pi_{U_x}(e_n).
\]
Let $R_x := \widetilde{S_x}\pi_{U_x}(T_x) \in GL_n^0(A(U_x))$. Since $\dim(X) = 0$, we may choose a refinement of $\{U_x : x\in X\}$, the members of which are mutually disjoint. Since $X$ is compact, we obtain a finite subcover of that refinement, denoted by $\{V_1,V_2,\ldots, V_m\}$. Note that the $V_i$ are cl-open and mutually disjoint. By \cref{lem: pullback_cx_algebra},
\[
A \cong \bigoplus_{i=1}^m A(V_i)
\]
so $GL_n^0(A) \cong \bigoplus_{i=1}^m GL_n^0(A(V_i))$. For each $1\leq i\leq m$, there exists $R_i \in GL_n^0(A(V_i))$ such that
\[
R_i\pi_{V_i}(v) = \pi_{V_i}(e_n).
\]
Hence, there exists $T\in GL_n^0(A)$ such that $\pi_{V_i}(T) = R_i$ for all $1\leq i\leq m$, so that $Tv = e_n$. Thus, $GL_n^0(A)$ acts transitively on $Lg_n(A)$, whence $csr(A)\leq n$ as required.
\end{proof}

We now prove an analogous result for the general stable rank as well.

\begin{theorem}
Let $X$ be a zero dimensional compact Hausdorff space, and $A$ be a unital $C(X)$-algebra, then
\[
gsr(A) \leq \sup\{gsr(A(x)) : x\in X\}.
\]
\end{theorem}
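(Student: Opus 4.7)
The plan is to adapt the proof of \cref{thm:zero_dim_case} essentially verbatim, with one key modification. In the csr case, one could lift a given element of $GL_n^0(A(x))$ to $GL_n^0(A)$ by writing it as a product of exponentials and lifting self-adjoint elements through the surjection $\pi_x$; in the gsr case, however, a general invertible in $GL_n(A(x))$ need not lift to $GL_n(A)$. I would therefore replace this global lift with a \emph{local} one, producing an element of $GL_n(A(U_x))$ on a suitable cl-open neighborhood $U_x$ of $x$.

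After the usual reductions (assume the supremum is finite, fix $n \geq \sup\{gsr(A(x)) : x\in X\}$ and $v \in Lg_n(A)$, and for each $x \in X$ choose $S \in GL_n(A(x))$ with $Sv(x) = e_n(x)$), the crux is to produce a cl-open neighborhood $U_x$ of $x$ and $T_x \in GL_n(A(U_x))$ with $T_x(x) = S$. To do this, I would use the surjectivity of $\pi_x$ to lift both $S$ and $S^{-1}$ to elements $\tilde{T}, \tilde{U} \in M_n(A)$. Since $\tilde{T}(x)\tilde{U}(x) = I_n = \tilde{U}(x)\tilde{T}(x)$, upper semi-continuity yields a cl-open neighborhood $U_x$ of $x$ on which both $\|\tilde{T}(y)\tilde{U}(y) - I_n\| < 1/2$ and $\|\tilde{U}(y)\tilde{T}(y) - I_n\| < 1/2$. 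Consequently, $\pi_{U_x}(\tilde{T})\pi_{U_x}(\tilde{U})$ and $\pi_{U_x}(\tilde{U})\pi_{U_x}(\tilde{T})$ are invertible in $M_n(A(U_x))$, which forces $T_x := \pi_{U_x}(\tilde{T})$ to be invertible, with $T_x(x) = S$. This local lift is the main obstacle compared to the csr argument; everything afterwards is a routine adaptation.

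From here the proof proceeds exactly as in \cref{thm:zero_dim_case}. Since $T_x(x)v(x) = e_n(x)$, upper semi-continuity provides a cl-open $V_x \subset U_x$ on which $\|T_x(y)v(y) - e_n(y)\| < 1/n$, and hence $\|\pi_{V_x}^{U_x}(T_x)\pi_{V_x}(v) - \pi_{V_x}(e_n)\| < 1/n$ in $A(V_x)^n$. Applying \cref{lem:open_orbit} in $A(V_x)$ yields $\widetilde{S_x} \in GL_n^0(A(V_x)) \subseteq GL_n(A(V_x))$ with $\widetilde{S_x}\pi_{V_x}^{U_x}(T_x)\pi_{V_x}(v) = \pi_{V_x}(e_n)$; set $R_x := \widetilde{S_x}\pi_{V_x}^{U_x}(T_x) \in GL_n(A(V_x))$. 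Using $\dim(X) = 0$ together with compactness, refine $\{V_x : x \in X\}$ to a finite cover by mutually disjoint cl-open sets $W_1,\ldots,W_m$, and apply \cref{lem: pullback_cx_algebra} to patch the restrictions $\pi_{W_i}^{V_{x_i}}(R_{x_i})$ into an element $T \in GL_n(A)$ with $Tv = e_n$, giving $gsr(A) \leq n$.
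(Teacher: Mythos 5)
Your proposal is correct and follows essentially the same route as the paper: both lift $S$ and $S^{-1}$ separately to $M_n(A)$, use upper semi-continuity of the fiberwise norm of $\tilde T\tilde U - I_n$ (and $\tilde U\tilde T - I_n$) to get invertibility of the restriction on a cl-open neighbourhood, and then run the csr argument verbatim with the disjoint cl-open refinement and the direct-sum decomposition from \cref{lem: pullback_cx_algebra}. The only cosmetic difference is the order in which you shrink neighbourhoods (invertibility first, then the $1/n$ estimate, versus the reverse in the paper), which is immaterial.
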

\begin{proof}
The proof follows similar lines as the previous theorem: We assume that $\sup\{gsr(A(x)) : x\in X\} < \infty$, and fix $n\geq \sup\{gsr(A(x)): x\in X\}$ and $v\in Lg_n(A)$. We wish to construct $T \in GL_n(A)$ such that $Tv = e_n$. To that end, we fix $x\in X$, and see that there exists $S \in GL_n(A(x))$ such that $Sv(x) = e_n(x)$. Now choose $T_x \in M_n(A)$ such that $T_x(x) = S$ (Note that $T_x$ may not be invertible). Then
\[
T_x(x)v(x) = e_n(x).
\]
As before, there is a cl-open neighbourhood $U_x$ of $x$ such that
\[
\|\pi_{U_x}(T_xv) - \pi_{U_x}(e_n)\| < 1/n.
\]
Now since $S \in GL_n(A(x))$, there exists $\widetilde{T} \in M_n(A)$ such that $\widetilde{T}(x) = S^{-1}$. Hence,
\[
T_x(x)\widetilde{T}(x) = I_n(x).
\]
By \cref{rem: kirchberg}, $M_n(A)$ is a $C(X)$-algebra, so by upper semi-continuity, there is a cl-open neighbourhood $W_x$ of $x$ such that
\[
\|\pi_{W_x}(T_x)\pi_{W_x}(\widetilde{T}) - \pi_{W_x}(I_n)\| < 1.
\]
Hence, $\pi_{W_x}(T_x)$ is right-invertible. Similarly, there is a cl-open neighbourhood $\widetilde{W_x}$ of $x$ such that $\pi_{\widetilde{W_x}}(T_x)$ is left-invertible. Thus, replacing $U_x$ by $U_x\cap W_x\cap \widetilde{W_x}$, we may assume that $\pi_{U_x}(T_x) \in GL_n(A(U_x))$. The remainder of the argument goes through, \emph{mutatis mutandis}, from the previous theorem, to conclude that there exists $T \in GL_n(A)$ such that $Tv = e_n$. Thus, $GL_n(A)$ acts transitively on $Lg_n(A)$, whence $gsr(A) \leq n$ as required.
\end{proof}

Before we proceed, we record a fact that will be useful to us later.

\begin{remark}\label{rem: csr_increasing}
If $X$ is a compact Hausdorff space and $A$ is a unital $C^{\ast}$-algebra, then the evaluation map at a point of $X$ gives a split epimorphism $C(X,A)\to A$. By Property (5) of \cref{rem: gsr_csr_properties}, it follows that
\[
csr(A) \leq csr(C(X,A)).
\]
In particular, if $n\leq m$, then $csr(\T^nA) \leq csr(\T^m A)$.
\end{remark}

Now, we need the following definition from \cite{prahlad}: Let $D$ be a unital $C^{\ast}$-algebra, and consider the sequence of groups
\[
\{1_D\} = GL_0(D) \hookrightarrow GL_1(D)\hookrightarrow GL_2(D)\hookrightarrow \ldots
\]
where the maps are the natural inclusions $\theta_n^D$. For an integer $k\geq 0$, we get an induced sequence of groups (or sets if $k=0$)
\[
\pi_k(GL_0(D)) \to \pi_k(GL_1(D)) \to \pi_k(GL_2(D)) \to \ldots
\]
We write $\text{inj}_k(D)$ to be the least integer $n\geq 1$ such that the map 
\[
\pi_k(GL_{m-1}(D))\to \pi_k(GL_m(D))
\] is injective for each $m\geq n$. Similarly, we write $\text{surj}_k(D)$ to be the least integer $n\geq 1$ such that the map $\pi_k(GL_{m-1}(D))\to \pi_k(GL_m(D))$ is surjective for each $m\geq n$.

The next lemma is a strengthening of \cite[Proposition 2.7]{prahlad}. While not strictly speaking necessary for our arguments, this may help shed some light on the right-hand-side term appearing in \cref{mainthm: fields}.

\begin{lemma}\label{lem: csr_inj_surj}
Let $D$ be a unital $C^{\ast}$-algebra and $k\geq 1$ be an integer. Then
\[
\max\{\text{inj}_{k-1}(D), \text{surj}_k(D), csr(D)\} = csr(\T^k D).
\]
\end{lemma}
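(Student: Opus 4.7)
Write $n_0 := \max\{\text{inj}_{k-1}(D), \text{surj}_k(D), csr(D)\}$. The plan is to prove both inequalities, making central use of the principal fibration
\[
F_m \hookrightarrow GL_m^0(D) \xrightarrow{\mu_m} Lg_m(D), \qquad T \mapsto Te_m,
\]
valid whenever $m \geq csr(D)$, whose fiber $F_m = \text{Stab}(e_m) \cap GL_m^0(D)$ deformation retracts onto the identity component of $\theta_m^{-1}(GL_m^0(D)) \subset GL_{m-1}(D)$. The long exact sequence of this fibration translates the two stability hypotheses into vanishings for $\pi_\ast(Lg_m(D))$: by exactness, $\text{surj}_k(D) \leq m$ makes the transgression $\pi_k(GL_m(D)) \to \pi_k(Lg_m(D))$ the zero map, $\text{inj}_{k-1}(D) \leq m$ makes that same transgression surjective, and together they force $\pi_k(Lg_m(D)) = 0$.

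For $n_0 \leq csr(\T^k D)$: the estimate $csr(D) \leq csr(\T^k D)$ follows from \cref{rem: csr_increasing}. For the remaining two, I would fix $m \geq csr(\T^k D)$ and a based map $\alpha: S^k \to GL_m(D)$ representing an arbitrary class in $\pi_k(GL_m(D))$. Pulling $\alpha$ back along the collapse $p: \T^k \to S^k$ of the fat wedge produces $\tilde\alpha := \alpha\circ p$, which equals $I_m$ on the fat wedge. Transitivity of the $GL_m^0(\T^k D)$-action on $\tilde\alpha\cdot e_m \in Lg_m(\T^k D)$ supplies a null-homotopic $T: \T^k \to GL_m(D)$ with $T\tilde\alpha\cdot e_m = e_m$; hence $T\tilde\alpha$ lands in $\text{Stab}(e_m) \cong GL_{m-1}(D) \ltimes D^{m-1}$, which deformation retracts onto $\theta_m(GL_{m-1}(D))$. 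This yields $\beta: \T^k \to GL_{m-1}(D)$ with $\tilde\alpha \simeq \theta_m\circ\beta$; restricting to the top cell exhibits $[\alpha]$ as $\theta_m[\beta|_{S^k}]$, giving $\text{surj}_k(D) \leq m$. The estimate $\text{inj}_{k-1}(D) \leq m$ follows by an analogous construction that starts from a class $[\gamma] \in \ker(\theta_m)$ on $\pi_{k-1}$, uses a chosen nullhomotopy of $\theta_m\gamma$ to assemble a test map $\T^k \to GL_m(D)$, and feeds it into the same machinery.

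For $csr(\T^k D) \leq n_0$: given $m \geq n_0$ and $v \in Lg_m(\T^k D)$, I would argue that $v$ admits a null-homotopic lift through $\mu_m$. Since $\mu_m$ is a Hurewicz fibration, once one knows $v$ is null-homotopic it suffices to lift a chosen nullhomotopy starting from the constant map $I_m$, producing $S: \T^k \to GL_m(D)$ with $Se_m = v$ and $S \simeq I_m$, whence $T := S^{-1}$ gives $Tv = e_m$ with $T \in GL_m^0(\T^k D)$. The hypothesis $\pi_k(Lg_m(D)) = 0$ is the top-dimensional input; the lower-degree pieces (needed to force $v$ into the path-component of $e_m$ in $C(\T^k, Lg_m(D))$) are obtained by invoking the $(\geq)$ direction at lower ranks $\T^{j+1} D$ for $j < k$, which gives $\text{inj}_j(D), \text{surj}_{j+1}(D) \leq csr(\T^{j+1} D)$ and then, via \cref{rem: csr_increasing}, feeds back into a simultaneous induction on $k$ between the two directions.

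The main obstacle will be twofold. First, the basepoint bookkeeping in the $(\geq)$ direction: the element $T$ coming from transitivity is only freely null-homotopic, so $T(x_0)$ need not equal $I_m$ and $T\tilde\alpha$ is no longer constant on the fat wedge; converting the free-homotopy conclusion $\tilde\alpha \simeq \theta_m\circ\beta$ into a genuine preimage in $\pi_k(GL_{m-1}(D))$ requires translating by $T(x_0)^{-1}$ and absorbing the correction via a relative nullhomotopy on the fat wedge, extended to all of $\T^k$ through the cofibration $\text{fat wedge} \hookrightarrow \T^k$. Second, closing the inductive loop in the $(\leq)$ direction needs care to prevent the mutual dependency between the two directions from becoming circular, since the lower-degree vanishings one wishes to invoke at rank $k$ are themselves statements at strictly smaller ranks whose equality with the claimed formula must already be in hand.
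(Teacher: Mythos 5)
Your first paragraph identifies the correct mechanism --- the fibration $GL_m(D)\to Lg_m(D)$, $T\mapsto Te_m$, with fiber deformation retracting onto $\theta_m(GL_{m-1}(D))$, and the observation that $\text{surj}_k(D)\le m$ and $\text{inj}_{k-1}(D)\le m$ together force $\pi_k(Lg_m(D))=0$ via the long exact sequence --- and this is indeed the engine behind the result. But both of the ``main obstacles'' you name at the end are genuine gaps, not technicalities, and they sit exactly where the mathematical content of the lemma lives. In the direction $csr(\T^kD)\le n_0$, null-homotoping an arbitrary $v\in Lg_m(\T^kD)\cong C(\T^k,Lg_m(D))$ over the skeleta of $\T^k$ requires $\pi_j(Lg_m(D))=0$ for \emph{every} $0\le j\le k$, whereas the hypothesis $m\ge n_0$ hands you only the degree-$k$ vanishing (plus connectedness from $m\ge csr(D)$). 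Extracting the lower-degree conditions $\text{inj}_{j-1}(D),\text{surj}_j(D)\le m$ for $j<k$ from the three numbers in $n_0$ is precisely the hard part; your proposed ``simultaneous induction between the two directions'' would need the lemma at smaller $k$ for the same algebra $D$, which, as you yourself suspect, is circular as set up. The paper does not prove this direction at all: it is quoted wholesale as \cite[Theorem 3.10]{prahlad}.

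In the direction $n_0\le csr(\T^kD)$, the transitivity of $GL_m^0(\T^kD)$ only yields \emph{free} homotopies of maps out of $\T^k$. Your plan to recover a preimage of $[\alpha]$ in $\pi_k(GL_{m-1}(D))$ from the free homotopy $\tilde\alpha\simeq\theta_m\circ\beta$ requires making $\beta$ constant on the fat wedge, i.e.\ knowing that $\beta$ restricted to the fat wedge is null-homotopic \emph{in $GL_{m-1}(D)$}; what the construction gives you is only that $\theta_m\circ\beta$ is null-homotopic there in $GL_m(D)$, and descending that is an injectivity statement in lower degrees that is not available. Translating by $T(x_0)^{-1}$ does not repair this. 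The paper sidesteps the issue: it deduces $\text{surj}_k(D)\le n$ by proving $\pi_k(Lg_m(D))=0$ from $\pi_0(Lg_m(\T^kD))=0$ together with the bijectivity of the forgetful map $\pi_k(Lg_m(D))\to[\T^k,Lg_m(D)]$ (\cite[Lemma 2.6]{prahlad}, which needs the auxiliary bound $m\ge gsr(\T D)$, itself first extracted from $m\ge csr(\T^kD)$), and it obtains $\text{inj}_{k-1}(D)\le n$ by an entirely different route, through a chain of $gsr$ inequalities for suspension and sphere algebras ending in $\text{inj}_0(C(S^{k-1})\otimes D)=\text{inj}_{k-1}(D)$, rather than by a clutching construction. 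So while your outline is directionally sensible, the two steps you defer are the theorem, and as written the argument does not close.
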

\begin{proof}
By \cite[Theorem 3.10]{prahlad}, it follows that
\[
csr(\T^k D) \leq \max\{\text{inj}_{k-1}(D), \text{surj}_k(D), csr(D)\}.
\]
To prove the reverse inequality, let $n\geq csr(\T^k D)$. Then by \cref{rem: csr_increasing},
\[
n \geq csr(D).
\]
Secondly, by Property (2) of \cref{rem: gsr_csr_properties}, we have $n \geq gsr(\T^k D)$. So by \cite[Theorem 3.7]{prahlad} applied to $X := \{\ast\}$ and $A := \T^{k-1}D$, we have
\begin{equation}\label{eqn:gsr_inequality}
n \geq \max\{gsr(\T^{k-1} D), \text{inj}_0(\T^{k-1}D)\}.
\end{equation}
Now by \cite[Corollary 4.2]{prahlad} and \cite[Lemma 4.3]{prahlad},
\[
gsr(\T^{k-1}D) \geq gsr(C(\Sigma \T^{k-2})\otimes D) \geq gsr(C(S^{k-1})\otimes D).
\]
Furthermore, by \cite[Lemma 3.8]{prahlad} and \cite[Proposition 2.7]{prahlad},
\[
gsr(C(S^{k-1})\otimes D)\geq \text{inj}_0(C(S^{k-1})\otimes D) = \text{inj}_{k-1}(D).
\]
So we conclude that
\[
n\geq \text{inj}_{k-1}(D).
\]
Finally, for $m\geq n$ fixed, we wish to show that the map $\pi_k(GL_{m-1}(D)) \to \pi_k(GL_m(D))$ is surjective. Since $m\geq csr(D)$, \cite[Corollary 8.5]{rieffel} implies that $Lg_m(D)$ is connected. Hence, by \cite[Corollary 1.6]{corach}, we have a long exact sequence
\[
\ldots \to \pi_{k+1}(Lg_m(D))\to \pi_k(GL_{m-1}(D))\to \pi_k(GL_m(D)) \to \pi_k(Lg_m(D)) \to \ldots
\]
Therefore, it suffices to show that $\pi_k(Lg_m(D)) = 0$.

Note that, since $m\geq csr(\T^k D)$, $Lg_m(\T^k D)$ is connected by \cite[Corollary 8.5]{rieffel}. Furthermore, the natural map $Lg_m(\T^k D) \to C(\T^k, Lg_m(D))$ given by evaluation is a homeomorphism by \cite[Lemma 2.3]{rieffel2}. Hence,
\[
[\T^k, Lg_m(D)] = \pi_0(Lg_m(\T^k D)) = 0
\] 
where the left hand side denotes the set of free homotopy classes of maps from $\T^k$ to $Lg_m(D)$. Now, by repeatedly applying \cref{eqn:gsr_inequality}, we see that $m\geq gsr(\T D)$. Since $m \geq csr(D)$, it follows by \cite[Lemma 2.6]{prahlad} that the forgetful map
\[
\pi_k(Lg_m(D)) \to [\T^k, Lg_m(D)]
\]
is bijective. Hence, $\pi_k(Lg_m(D)) = 0$ for all $m\geq n$ as required.
\end{proof}

The next lemma, which is crucial to our argument, is contained in the proof of \cite[Theorem 2.14]{prahlad}. We isolate it here in the form we need it.

\begin{lemma}\label{lem: pullback_csr}
Consider a pullback diagram of unital $C^{\ast}$-algebras
\[
\xymatrix{
	A\ar[r]^{\alpha}\ar[d]_{\beta} & B\ar[d]^{\delta} \\
	C\ar[r]^{\gamma} & D
}
\]
where either $\gamma$ or $\delta$ is surjective, and let $n$ be a natural number such that
\[
n \geq \max\{\text{inj}_0(D), \text{surj}_1(D)\}.
\]
Let $v\in Lg_n(A)$, and suppose that there exist $S_1\in GL_n^0(B)$ and $S_2 \in GL_n^0(C)$ such that
\[
S_1\alpha(v) = e_n \text{ and } S_2\beta(v) = e_n.
\]
Then, there exists $T \in GL_n^0(A)$ such that $Tv = e_n$.
\end{lemma}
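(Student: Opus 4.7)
My plan is to construct $T$ by reconciling $S_1$ and $S_2$ via an element of $GL_n^0(D)$ built from them. Set
\[
R := \gamma(S_2)\delta(S_1)^{-1} \in GL_n^0(D).
\]
The pullback identity $\delta\circ\alpha = \gamma\circ\beta$ applied to the hypotheses $S_1\alpha(v) = e_n$ and $S_2\beta(v) = e_n$ yields $\delta(S_1)^{-1}e_n = \delta(\alpha(v)) = \gamma(\beta(v)) = \gamma(S_2)^{-1}e_n$, and so $Re_n = e_n$. If one can produce a lift $L \in GL_n^0(B)$ of $R$ satisfying $Le_n = e_n$, then $T := (LS_1, S_2)$ will be the required element: the identity $\delta(LS_1) = R\delta(S_1) = \gamma(S_2)$ places $T$ in the pullback $A$, both components are invertible, and the computations $\alpha(Tv) = Le_n = e_n$ and $\beta(Tv) = S_2\beta(v) = e_n$ force $Tv = e_n$.

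To construct $L$, assume without loss of generality that $\delta$ is surjective (otherwise swap the roles of $B$ and $C$). Since $R$ fixes $e_n$ it has block form
\[
R = \begin{pmatrix} P & q \\ 0 & 1 \end{pmatrix}
\]
with $P\in GL_{n-1}(D)$ and $q \in D^{n-1}$. The straight-line homotopy in the upper-right block connects $R$ to $\theta^D(P)$ inside $GL_n(D)$, so $\theta^D(P) \in GL_n^0(D)$. The hypothesis $n \geq \text{inj}_0(D)$ renders the map $\pi_0(GL_{n-1}(D)) \to \pi_0(GL_n(D))$ injective and therefore forces $P \in GL_{n-1}^0(D)$. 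Writing $P$ as a finite product of exponentials in $M_{n-1}(D)$ and lifting each exponent under the surjection $\delta$ yields $\widetilde{P} \in GL_{n-1}^0(B)$ with $\delta(\widetilde{P}) = P$. Choosing any lift $\widetilde{q} \in B^{n-1}$ of $q$ and setting
\[
L := \begin{pmatrix} \widetilde{P} & \widetilde{q} \\ 0 & 1 \end{pmatrix}
\]
produces an element of $GL_n^0(B)$ (again by the straight-line homotopy) with $Le_n = e_n$ and $\delta(L) = R$.

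The element $T := (LS_1, S_2)$ now satisfies all of the properties stated above, and the remaining step is to verify $T \in GL_n^0(A)$. This is where I expect the main difficulty and where the hypothesis $n \geq \text{surj}_1(D)$ must be used. A path from $(I,I)$ to $T$ inside $GL_n(A)$ consists of paths $b_t$ in $GL_n(B)$ and $c_t$ in $GL_n(C)$ satisfying $\delta(b_t) = \gamma(c_t)$ for every $t$. Any initial choice of paths in $GL_n^0(B)$ and $GL_n^0(C)$ terminating at $LS_1$ and $S_2$ will generally fail this compatibility; the defect is the loop $\ell(t) := \delta(b_t)\gamma(c_t)^{-1}$ in $GL_n^0(D)$ based at $I$, whose class in $\pi_1(GL_n(D))$ is the Mayer--Vietoris obstruction to $T$ lying in $GL_n^0(A)$. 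The hypothesis $n \geq \text{surj}_1(D)$ guarantees that $[\ell]$ lies in the image of $\theta_\ast : \pi_1(GL_{n-1}(D)) \to \pi_1(GL_n(D))$; this reduction, combined with the surjectivity of $\delta$ and an exponential lifting argument as above applied now to the smaller block, permits a modification of $b_t$ by a suitable loop in $GL_n^0(B)$ so that $\delta(b_t) = \gamma(c_t)$ holds throughout. This loop-tracking step is precisely the passage already carried out in the proof of \cite[Theorem 2.14]{prahlad}, from which the present lemma is extracted.
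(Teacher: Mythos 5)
Your first two paragraphs are essentially sound: the identity $Re_n=e_n$, the block decomposition of $R$, the use of $n\ge\text{inj}_0(D)$ to place the corner block $P$ in $GL_{n-1}^0(D)$, and the exponential lifting of $P$ through the surjection $\delta$ do produce an element $T=(LS_1,S_2)\in GL_n(A)$ with $Tv=e_n$. (One small slip: since $GL_n$ acts by left multiplication on columns, $Re_n=e_n$ forces the \emph{last column} of $R$ to equal $e_n$, so the block form is $R=\left(\begin{smallmatrix} P & 0 \\ q & 1\end{smallmatrix}\right)$ with $q$ a row, not the upper-triangular form you wrote; the argument is unaffected.) Note that the paper offers no proof of this lemma -- it is extracted from \cite[Theorem 2.14]{prahlad}, where the argument is run on the spaces $Lg_n$ rather than on the groups $GL_n$.

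The genuine gap is in your final step, which is precisely where membership in $GL_n^0(A)$ rather than $GL_n(A)$ must be earned. You correctly identify the obstruction as the defect loop $\ell(t)=\delta(b_t)\gamma(c_t)^{-1}$, but the mechanism you propose for killing it fails. Knowing $[\ell]\in\text{im}\bigl(\theta_*:\pi_1(GL_{n-1}(D))\to\pi_1(GL_n(D))\bigr)$ -- which is all that $n\ge\text{surj}_1(D)$ gives -- does not let you lift $\ell$ to a loop in $GL_n^0(B)$: under the fibration $GL_n^0(B)\to GL_n^0(D)$ a based loop lifts only to a \emph{path} ending in the kernel, whose endpoint need not be connectable to $I$ there, and "exponential lifting applied to the smaller block" lifts elements, not loops, so the same obstruction recurs one size down. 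What your modification of $b_t$ actually requires is $[\ell]\in\delta_*\pi_1(GL_n(B))\cdot\gamma_*\pi_1(GL_n(C))$, and $\text{im}(\theta_*)$ need not lie in that subgroup: for $C(\T)$ as the pullback of $C[0,1]$ and $\C$ over $\C\oplus\C$ with $n=2$, $\theta_*$ is onto $\pi_1(GL_2(\C\oplus\C))\cong\Z^2$ while $\delta_*\pi_1\cdot\gamma_*\pi_1$ is only the diagonal. The way the two hypotheses actually combine is through the Corach--Larotonda fibration $GL_{n-1}(D)\to GL_n(D)\to Lg_n(D)$: its homotopy exact sequence shows that $n\ge\text{inj}_0(D)$ and $n\ge\text{surj}_1(D)$ together force $\pi_1(Lg_n(D),e_n)=0$. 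One should therefore work with path components of $Lg_n$ (using that $GL_n^0(A)e_n$ is exactly the path component of $e_n$ in $Lg_n(A)$, by openness and closedness of orbits): take paths $p$ in $Lg_n(B)$ from $\alpha(v)$ to $e_n$ and $q$ in $Lg_n(C)$ from $\beta(v)$ to $e_n$; the loop $\delta(p)\ast\overline{\gamma(q)}$ is null-homotopic since $\pi_1(Lg_n(D))$ vanishes on that component, and the homotopy lifting property of $Lg_n(B)\to Lg_n(D)$ (available because $\delta$ is surjective) lets you replace $p$ by $p'$ with $\delta(p')=\gamma(q)$ pointwise, giving a path in $Lg_n(A)$ from $v$ to $e_n$. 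Your explicit matrix construction, while correct, only re-proves the $GL_n(A)$-orbit (i.e.\ $gsr$-type) statement and does not contribute to this essential last step.
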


\begin{remark}\label{rem:reorganize_cover}
As mentioned above, the proof of \cref{mainthm: fields} proceeds by induction on the covering dimension of the underlying space. What finally allows this argument to work is the following: If $X$ is a finite dimensional compact metric space, then covering dimension agrees with the small inductive dimension \cite[Theorem 1.7.7]{engelking}. Therefore, by \cite[Theorem 1.1.6]{engelking}, $X$ has an open cover $\mathcal{B}$ such that, for each $U \in \mathcal{B}$,
\[
\dim(\partial U)\leq \dim(X) - 1.
\]
Now suppose $\{U_1,U_2,\ldots, U_m\}$ is an open cover of $X$ such that $\dim(\partial U_i)\leq \dim(X)-1$ for $1\leq i\leq m$, we define sets $\{V_i : 1\leq i\leq m\}$ inductively by
\[
V_1 := \overline{U_1}, \text{ and } V_k := \overline{U_k\setminus \Bigl( \bigcup_{i<k} U_i\Bigr)} \text{ for } k>1
\]
and subsets $\{W_j : 1\leq j\leq m-1\}$ by
\[
W_j := \biggl(\bigcup_{i=1}^j V_i\biggr)\cap V_{j+1}.
\]
It is easy to see that $W_j \subset \bigcup_{i=1}^j \partial U_i$, so by \cite[Theorem 1.5.3]{engelking}, $\dim(W_j) \leq \dim(X)-1$ for all $1\leq j\leq m-1$.
\end{remark}

We are now in a position to prove \cref{mainthm: fields}. 

\begin{theorem}\label{thm: csr_cx_algebra}
Let $X$ be a compact metric space of finite covering dimension $N$, and let $A$ be a unital $C(X)$-algebra. Then
\[
csr(A) \leq \sup\{csr(\T^N A(x)) : x\in X\}.
\]
\end{theorem}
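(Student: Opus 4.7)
The plan is to induct on $N = \dim X$. The base case $N = 0$ is exactly \cref{thm:zero_dim_case}, since $\T^0 A(x) = A(x)$. For the inductive step, set $n := \sup_{x \in X} csr(\T^N A(x))$ (which we may assume is finite), fix $m \geq n$ and $v \in Lg_m(A)$; the goal is to construct $T \in GL_m^0(A)$ with $Tv = e_m$.

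The first half of the argument mirrors the proof of \cref{thm:zero_dim_case}. For each $x \in X$, \cref{rem: csr_increasing} gives $csr(A(x)) \leq csr(\T^N A(x)) \leq m$, so I can trivialize $v(x)$, lift the trivializer to some $T_x \in GL_m^0(A)$, and use upper semi-continuity together with \cref{lem:open_orbit} to obtain an open neighbourhood $U_x$ of $x$ and an element $R_x \in GL_m^0(A(\overline{U_x}))$ satisfying $R_x \pi_{\overline{U_x}}(v) = e_m$. Then \cref{rem:reorganize_cover} lets me replace the open cover $\{U_x\}$ by a finite collection of closed sets $V_1, \ldots, V_k$ covering $X$, each contained in some $\overline{U_{x_i}}$, whose successive overlaps $W_j := (V_1 \cup \cdots \cup V_j) \cap V_{j+1}$ have covering dimension at most $N - 1$. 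Restricting gives $R_i \in GL_m^0(A(V_i))$ with $R_i \pi_{V_i}(v) = e_m$ for each $i$.

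The heart of the proof is then a secondary induction on $j$ that patches the $R_i$ across $X_j := V_1 \cup \cdots \cup V_j$ via the pullback description of \cref{lem: pullback_cx_algebra} together with \cref{lem: pullback_csr}. The only nontrivial hypothesis of the latter to verify at each step is
\[
m \geq \max\{\text{inj}_0(A(W_j)), \text{surj}_1(A(W_j))\},
\]
which by \cref{lem: csr_inj_surj} reduces to $m \geq csr(\T A(W_j))$. This is where the outer induction feeds back in: by \cref{rem: kirchberg}, $\T A(W_j)$ is a $C(W_j)$-algebra with fiber $\T A(w)$ at $w \in W_j$, and $\dim W_j \leq N - 1$, so the inductive hypothesis applied to $\T A(W_j)$ gives
\[
csr(\T A(W_j)) \leq \sup_{w \in W_j} csr(\T^{N-1}(\T A(w))) = \sup_{w \in W_j} csr(\T^N A(w)) \leq n \leq m.
\]
The main technical hurdle will be this bookkeeping step: verifying that the $C(X)$-structure descends cleanly to each $W_j$ and then extends to $\T A(W_j)$, and tracking the interaction between the two nested inductions. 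Conceptually, this calculation also explains why exactly $\T^N$ appears in the theorem: each patching across a codimension-one boundary consumes one factor of $\T$ via \cref{lem: csr_inj_surj}, and $N$ such reductions are needed to reach the zero-dimensional base case.
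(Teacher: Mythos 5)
Your proposal is correct and follows essentially the same route as the paper's proof: outer induction on $\dim X$ with \cref{thm:zero_dim_case} as the base case, local trivialization plus \cref{rem:reorganize_cover} to get closed pieces $V_i$ with low-dimensional overlaps $W_j$, and an inner induction patching via \cref{lem: pullback_cx_algebra} and \cref{lem: pullback_csr}, with the hypothesis of the latter verified by applying the outer inductive hypothesis to the $C(W_j)$-algebra $\T A(W_j)$ and invoking \cref{lem: csr_inj_surj}. Your closing observation that each codimension-one patching consumes one factor of $\T$ is precisely the mechanism behind the $\T^N$ in the statement.
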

\begin{proof}
We assume by \autoref{thm:zero_dim_case} that $N\geq 1$ and the theorem is true for any $C(Y)$-algebra, where $Y$ is a compact metric space with $\dim(Y)\leq N-1$. So we assume that $\sup\{csr(\T^N A(x)) : x\in X\} < \infty$ and fix
\[
n\geq \sup\{csr(\T^N A(x)) : x\in X\}.
\]
We wish to show that $GL_n^0(A)$ acts transitively on $Lg_n(A)$. We begin, as before, with a vector $v\in Lg_n(A)$ and a point $x\in X$. By the first part of the proof of \cref{thm:zero_dim_case} (and using the fact that $X$ is locally compact), there is an open neighbourhood $U_x$ of $x$ and an operator $S_x \in GL_n^0(A)$ such that
\[
\pi_{\overline{U_x}}(S_xv-e_n) = 0.
\]
Furthermore, as in \cref{rem:reorganize_cover}, we may assume that $\dim(\partial U_x) \leq N-1$ for all $x\in X$. Now choose a subcover $\{U_1,U_2,\ldots U_m\}$ of $\{U_x : x\in X\}$, and define $\{V_i\}$ and $\{W_j\}$ as in \cref{rem:reorganize_cover}. Then each $V_i$ is a closed set and there are $S_i \in GL_n^0(A(V_i))$ such that
\[
\pi_{V_i}(S_iv - e_n) = 0
\]
for each $1\leq i\leq m$. We now induct on $m$ to produce an operator $T \in GL_n^0(A)$ such that $Tv = e_n$.

If $m=1$ there is nothing to prove, so suppose $m>1$, then $W_1 = V_1\cap V_2$ satisfies $\dim(W_1) \leq N-1$. By \cref{rem: kirchberg}, $\T A(W_1)$ is a $C(W_1)$-algebra with fibers $\{\T A(x) : x\in W_1\}$. So by induction hypothesis,
\begin{equation*}
\begin{split}
csr(\T A(W_1)) &\leq \sup\{csr(C(\T^{\dim(W_1)},\T A(x))) : x\in W_1\} \\
&= \sup\{csr(\T^{\dim(W_1)+1} A(x)) : x\in W_1\}.
\end{split}
\end{equation*}
But $\dim(W_1) \leq N-1$, so by \cref{rem: csr_increasing},
\[
csr(\T A(W_1)) \leq \sup\{csr(\T^N A(x)) : x\in W_1\} \leq n.
\]
By \cref{lem: csr_inj_surj}, 
\[
\max\{\text{inj}_0(A(W_1)), \text{surj}_1(A(W_1))\} \leq n.
\]
Now by \cref{lem: pullback_cx_algebra}, $A(V_1\cup V_2)$ is a pullback
\[
\xymatrix{
	A(V_1\cup V_2)\ar[r]\ar[d] & A(V_1)\ar[d] \\
	A(V_2)\ar[r] & A(W_1)
}
\]
Since the quotient maps in this diagram are surjective, \cref{lem: pullback_csr} allows us to construct $\widetilde{T} \in GL_n^0(A(V_1\cup V_2))$ such that
\[
\widetilde{T}\pi_{V_1\cup V_2}(v) = e_n.
\]

Now observe that $W_2=(V_1\cup V_2)\cap V_3$ and $\dim(W_2)\leq N-1$. Replacing $V_1$ by $V_1\cup V_2$ and $V_2$ by $V_3$ in the earlier argument, we may repeat the above procedure. By induction on $m$, we finally construct an element $T \in GL_n^0(A)$ such that $T(v) = e_n$. Thus, $GL_n^0(A)$ acts transitively on $Lg_n(A)$, so $csr(A) \leq n$ as required.
\end{proof}

\subsection{Application to Group $C^{\ast}$-algebras}

Let $G$ be a countable, discrete group which can be obtained as a central extension of the form
\[
0\to N\to G\to Q \to 0
\]
where $N$ and $Q$ are finitely generated, abelian groups and $Q$ is free (central means that the image of $N$ lies in the center of $G$). The goal of this section is to estimate the connected stable rank of $C^{\ast}(G)$.

To begin with, we briefly review the notion of a twisted group $C^{\ast}$-algebra in the discrete case (See \cite{packer2} for more details): Let $K$ be a discrete group. A multiplier (or \emph{normalized 2-cocycle} with values in $\T$) on $K$ is a map $\omega : K\times K \to \T$ satisfying
\[
\omega(s,1) = \omega(1,s) = 1 \text{ and } \omega(s,t)\omega(st,r) = \omega(s,tr)\omega(t,r)
\]
for all $s,t,r\in K$. Given a multiplier $\omega$ on $K$, we define an $\omega$-twisted convolution product and an $\omega$-twisted involution on $\ell^1(K)$ by
\begin{equation*}
\begin{split}
(f_1\ast f_2)(t) &:= \sum_{s \in K} f_1(s)f_2(s^{-1}t)\omega(s,s^{-1}t), \text{ and } \\
f^{\ast}(t) &:= \overline{\omega(t,t^{-1})f(t^{-1})}.
\end{split}
\end{equation*}
This makes $\ell^1(K)$ into a $\ast$-algebra, and its universal enveloping algebra is called the full twisted group $C^{\ast}$-algebra $C^{\ast}(K,\omega)$.

The following theorem of Packer and Raeburn allows us to use \cref{mainthm: fields} in this context. Note that, if $N$ is an Abelian group, $\widehat{N}$ denotes its Pontrjagin dual group.

\begin{theorem}\label{thm: packer_raeburn}\cite[Theorem 1.2]{packer}
Let $G$ be a countable, discrete, amenable group given as a central extension
\[
0 \to N \to G \to Q \to 0
\]
and let $\sigma$ be a multiplier on $G$ such that $\sigma(n,s) = \sigma(s,s^{-1}ns)$ for all $n\in N$ and $s\in G$. Then, $C^{\ast}(G, \sigma)$ is isomorphic to a continuous $C(\widehat{N})$-algebra whose fibers are twisted group C*-algebras of the form $C^{\ast}(Q,\omega)$.
\end{theorem}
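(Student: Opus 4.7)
The plan is to produce the $C(\widehat N)$-structure on $C^{\ast}(G,\sigma)$ from a central subalgebra coming from $N$, and then identify the fibres using a set-theoretic section of the quotient $G\to Q$.

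\emph{Step 1 (the $C(\widehat N)$-structure).} Specializing the hypothesis $\sigma(n,s)=\sigma(s,s^{-1}ns)$ to $s\in N$ and using the centrality and commutativity of $N$ forces $\sigma|_{N\times N}$ to be symmetric. For a finitely generated abelian group, every symmetric $\T$-valued $2$-cocycle is a coboundary (any class in $H^2$ is represented by an alternating bicharacter, and a symmetric cocycle produces the trivial bicharacter), so $C^{\ast}(N,\sigma|_N)\cong C^{\ast}(N)\cong C(\widehat N)$. The same hypothesis guarantees that the canonical inclusion $\ell^1(N,\sigma|_N)\hookrightarrow\ell^1(G,\sigma)$ extends to a $\ast$-embedding into the centre of $C^{\ast}(G,\sigma)$, which endows the latter with the structure of a $C(\widehat N)$-algebra.

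\emph{Step 2 (identification of the fibres).} Fix a normalized set-theoretic section $\tau\colon Q\to G$ of the quotient, and let $c\colon Q\times Q\to N$ be the resulting $N$-valued $2$-cocycle defined by $\tau(p)\tau(q)=c(p,q)\,\tau(pq)$. For each $\chi\in\widehat N$, define a multiplier $\omega_\chi$ on $Q$ by $\omega_\chi(p,q):=\chi(c(p,q))\,\sigma(\tau(p),\tau(q))$; its cocycle identities follow routinely from those for $\sigma$ and $c$. Writing each $g\in G$ uniquely as $g=n\,\tau(q)$, the assignment sending the canonical unitary $u_g\in C^{\ast}(G,\sigma)$ to $\chi(n)$ times the corresponding twisted unitary in $C^{\ast}(Q,\omega_\chi)$ descends to a surjective $\ast$-homomorphism $\pi_\chi\colon C^{\ast}(G,\sigma)\to C^{\ast}(Q,\omega_\chi)$ whose kernel coincides with the evaluation ideal at $\chi$ in the $C(\widehat N)$-structure. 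Amenability of $G$ (and hence of $Q$) is essential here to lift the $\ell^1$-level map to the full enveloping $C^{\ast}$-algebras.

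\emph{Step 3 (continuity, the main obstacle).} The hardest part is upgrading the automatic upper semi-continuity of $\chi\mapsto\|\pi_\chi(a)\|$ to genuine continuity. It suffices to check continuity on the dense subspace spanned by the finitely supported generators $u_g$. Since $\chi\mapsto\chi(c(p,q))$ is continuous for each fixed pair $(p,q)$ by definition of the Pontryagin topology, the multiplier $\omega_\chi$ varies continuously with $\chi$ in the pointwise sense. Using amenability of $Q$ to express $\|\pi_\chi(f)\|$ via a twisted left regular representation on $\ell^2(Q)$, one derives joint continuity in $\chi$ through direct operator-norm estimates on the resulting family of operators. This assembles into the continuous field structure asserted by the theorem.
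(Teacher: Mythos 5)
This statement is quoted from Packer--Raeburn (\cite[Theorem 1.2]{packer}) and is not proved in the paper, so your reconstruction has to be judged on its own terms. Steps 1 and 2 follow the standard architecture and are essentially sound: with $N$ central the hypothesis gives $\sigma(n,s)=\sigma(s,n)$, so the canonical unitaries $u_n$, $n\in N$, are central and $\sigma|_{N\times N}$ is symmetric, hence a coboundary because $\T$ is divisible (so $\mathrm{Ext}(N,\T)=0$; finite generation is not needed), yielding the central copy of $C(\widehat{N})$; and the identification of the fiber $A/C(\widehat{N},\{\chi\})A$ with $C^{\ast}(Q,\omega_\chi)$ follows by applying universality of the \emph{full} twisted group algebra in both directions. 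Two small corrections: your formula for $\omega_\chi$ is missing a factor $\overline{\sigma(c(p,q),\tau(pq))}$ (harmless, since only the existence of some multiplier on $Q$ is used downstream), and amenability is not what extends the $\ell^1$-level map to the enveloping $C^{\ast}$-algebras --- universality does that unconditionally; amenability is needed only for the norm computation in Step 3.

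The genuine gap is in Step 3, which you correctly flag as the crux. The mechanism you propose --- ``direct operator-norm estimates on the resulting family of operators'' --- amounts to showing that $\chi\mapsto\lambda_{\omega_\chi}(f)$ is norm-continuous for finitely supported $f$, and this fails. Indeed $\lambda_{\omega_\chi}(q)$ sends $\delta_p$ to $\omega_\chi(q,p)\delta_{qp}$ (up to convention), so $\|\lambda_{\omega_\chi}(q)-\lambda_{\omega_{\chi'}}(q)\|=\sup_{p\in Q}|\omega_\chi(q,p)-\omega_{\chi'}(q,p)|$; the dependence on $\chi$ runs through $\chi(c(q,p))$ with $c(q,p)$ ranging over an infinite subset of $N$, and pointwise convergence $\chi'\to\chi$ in $\widehat{N}$ gives no uniform control over that supremum. (This is exactly why continuity of the field of rotation algebras over $\T$ is a theorem and not an estimate.) What does work: upper semicontinuity of $\chi\mapsto\|\pi_\chi(a)\|$ is automatic for any $C(X)$-algebra, and for lower semicontinuity one uses amenability of $Q$ to write, for finitely supported $f$, $\|\pi_\chi(f)\|=\|\lambda_{\omega_\chi}(f)\|=\sup_n\,\tau_\chi\bigl((f^{\ast}\ast f)^{\ast n}\bigr)^{1/2n}$, where $\tau_\chi$ is the canonical (faithful) trace on $C^{\ast}_r(Q,\omega_\chi)$; each term $\tau_\chi\bigl((f^{\ast}\ast f)^{\ast n}\bigr)$ is a finite algebraic expression in the values of $f$ and of $\omega_\chi$ at finitely many points, hence continuous in $\chi$, and a supremum of continuous functions is lower semicontinuous. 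The conclusion of the theorem is correct and your outline is the right one, but the continuity step needs this (or an equivalent) argument rather than norm estimates on the operators themselves.
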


We immediately conclude \cref{mainthm: groups}.

\begin{theorem}
Let $G$ be a discrete group that is a central extension $0 \to N\to G\to Q \to 0$ where $N$ is a finitely generated abelian group of rank $n$, and $Q$ is a free abelian group of rank $m$. Then
\[
csr(C^{\ast}(G)) \leq \biggl\lceil \frac{n+m}{2}\biggr\rceil+1.
\]
\end{theorem}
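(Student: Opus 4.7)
The plan is to combine the Packer and Raeburn theorem (\cref{thm: packer_raeburn}) with our main $C(X)$-algebra estimate \cref{thm: csr_cx_algebra}, and to conclude by invoking a classical bound on the connected stable rank of noncommutative tori.

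First I would note that $G$ is $2$-step nilpotent: since $N$ is central and $Q = G/N$ is abelian, the commutator subgroup $[G,G]$ lies in $N$. In particular $G$ is amenable, so \cref{thm: packer_raeburn} applies to $C^{\ast}(G) \cong C^{\ast}(G,1)$ (the trivial multiplier satisfies the required cocycle condition vacuously). This exhibits $C^{\ast}(G)$ as a continuous $C(\widehat{N})$-algebra whose fiber at each $x \in \widehat{N}$ is a twisted group $C^{\ast}$-algebra $C^{\ast}(\Z^m, \omega_x)$ for some multiplier $\omega_x$ on $\Z^m$. Writing $N \cong \Z^n \oplus F$ with $F$ finite abelian, the dual $\widehat{N} \cong \T^n \times \widehat{F}$ is a compact metric space of covering dimension $n$, so \cref{thm: csr_cx_algebra} yields
\[
csr(C^{\ast}(G)) \leq \sup_{x \in \widehat{N}} csr\bigl(C(\T^n) \otimes C^{\ast}(\Z^m, \omega_x)\bigr).
\]

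Next, using $C(\T^n) \cong C^{\ast}(\Z^n, 1)$ together with the standard compatibility $C^{\ast}(H, \omega_H) \otimes C^{\ast}(K, \omega_K) \cong C^{\ast}(H \times K, \omega_H \times \omega_K)$ for discrete abelian groups with multipliers, I would identify each tensor product on the right with a (twisted) group $C^{\ast}$-algebra $C^{\ast}(\Z^{n+m}, \widetilde{\omega}_x)$, where $\widetilde{\omega}_x$ extends $\omega_x$ by $1$ on the $\Z^n$ summand. Each such algebra is thus a noncommutative $(n+m)$-torus, and the theorem then follows by invoking the bound
\[
csr\bigl(C^{\ast}(\Z^d, \omega)\bigr) \leq \biggl\lceil \frac{d}{2} \biggr\rceil + 1
\]
which holds for every multiplier $\omega$ on $\Z^d$.

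The main obstacle is justifying this last estimate, which is not established in the body of the paper. In the commutative case $\omega = 1$, the inequality $csr(C(\T^d)) \leq \lceil d/2 \rceil + 1$ is classical and goes back to Rieffel. A general multiplier can be handled by a further application of \cref{thm: packer_raeburn}: taking the central subgroup to be the radical of the antisymmetrization $(s,t) \mapsto \widetilde{\omega}_x(s,t)\overline{\widetilde{\omega}_x(t,s)}$---on which the cocycle condition $\widetilde{\omega}_x(r,s) = \widetilde{\omega}_x(s,r)$ is automatic by construction---reduces the problem to estimating $csr$ of a $C(\T^s)$-algebra whose fibers are simple noncommutative tori, and simple noncommutative tori have topological stable rank one by work of Phillips. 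One then concludes via \cref{thm: csr_cx_algebra} and \cref{lem: csr_inj_surj} together with known bounds on the homotopy groups of the unitary groups of simple noncommutative tori.
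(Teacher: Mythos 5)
Your proposal follows essentially the same route as the paper: Packer--Raeburn over $\widehat{N}$, then \cref{thm: csr_cx_algebra}, then a second Packer--Raeburn decomposition over the dual of the symmetrizer subgroup, bottoming out at simple noncommutative tori (your repackaging of $C(\T^n)\otimes C^{\ast}(\Z^m,\omega_x)$ as a single twisted group algebra $C^{\ast}(\Z^{n+m},\widetilde{\omega}_x)$ is only a cosmetic reorganization of the paper's handling of $\T^n C^{\ast}(Q,\omega)$ as a $C(\widehat{Z_{\omega}})$-algebra). The one ingredient you leave vague --- that $csr(\T^{\ell} A_{\theta})\leq 2$ for a simple noncommutative torus $A_{\theta}$, which is what your appeal to ``known bounds on homotopy groups of unitary groups'' must deliver (note that $tsr(A_{\theta})=1$ alone only gives $csr(A_{\theta})\leq 2$, not the needed bound after tensoring with $C(\T^{\ell})$) --- is exactly \cite[Proposition 2.7]{rieffel2}, which the paper cites directly, together with Nistor's bound $csr(C(\T^{\ell}))\leq \lceil \ell/2\rceil+1$ for the fibers that degenerate to $\C$; with those citations your argument closes.
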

\begin{proof}
Note that $G$ is amenable, so by \cref{thm: packer_raeburn} (taking the trivial multiplier on $G$), $C^{\ast}(G)$ is a continuous $C(\widehat{N})$-algebra, each of whose fibers are of the form $C^{\ast}(Q,\omega)$ for some multiplier $\omega$ on $Q$. Now $\widehat{N} \cong \T^n \times F$ for some finite set $F$, so $\widehat{N}$ is a compact metric space of dimension $n$. Thus, by \cref{mainthm: fields},
\begin{equation}\label{eqn: csr_group}
csr(C^{\ast}(G)) \leq \sup_{\omega}\{csr(\T^n C^{\ast}(Q,\omega))\}.
\end{equation}
Now fix a multiplier $\omega$ on $Q$, and consider $B := C^{\ast}(Q,\omega)$. Define $Z_{\omega}$ to be the symmetrizer subgroup
\[
Z_{\omega} := \{x\in Q : \omega(x,y) = \omega(y,x) \quad\forall y\in Q\}
\]
and observe that, since $Q$ is abelian, $Z_{\omega}$ is central and satisfies the conditions of \cref{thm: packer_raeburn}. Therefore, $B$ is a $C(\widehat{Z_{\omega}})$-algebra. By \cref{rem: kirchberg}, $\T^n B$ is a $C(\widehat{Z_{\omega}})$-algebra, each of whose fibers are of the form $\T^n B(x)$. So by \cref{mainthm: fields},
\[
csr(\T^n B) \leq \sup\{ csr(\T^{N+n} B(x)) : x\in \widehat{Z_{\omega}}\}
\]
where $N = \dim(\widehat{Z_{\omega}})$. But $Z_{\omega}$ is a subgroup of $Q$, so has rank $\leq m$, whence $\dim(\widehat{Z_{\omega}}) \leq m$ (as above). Hence, by \cref{rem: csr_increasing},
\begin{equation}\label{eqn: csr_a_theta}
csr(\T^n B) \leq \sup\{ csr(\T^{m+n} B(x)) : x\in \widehat{Z_{\omega}}\}.
\end{equation}
Furthermore, by \cite[Theorem 1.5]{packer}, $\widehat{Z_{\omega}}$ is the primitive spectrum of $B$, so by the Dauns-Hoffmann theorem \cite[Theorem IV.1.6.7]{blackadar2}, each such fiber $B(x)$ is a simple $C^{\ast}$-algebra.

Now fix $x\in \widehat{Z_{\omega}}$ and note that by \cref{thm: packer_raeburn}, $B(x)$ is of the form $C^{\ast}(K,\sigma)$ where $K = Q/Z_{\omega}$ and $\sigma$ is a multiplier on $K$. Since $K$ is also a free Abelian group,
\[
C^{\ast}(K,\sigma) \cong A_{\theta}
\]
where $A_{\theta}$ is a simple non-commutative torus or $\C$. If $A_{\theta}$ is a simple non-commutative torus, then
\[
csr(\T^{\ell} A_{\theta}) \leq 2
\]
by \cite[Proposition 2.7]{rieffel2}. And if $A_{\theta} = \C$, then
\[
csr(\T^{\ell} A_{\theta}) = csr(C(\T^{\ell})) \leq \biggl\lceil \frac{\ell}{2}\biggr\rceil + 1
\]
by \cite[Corollary 2.5]{nistor}. Hence, for any point $x \in \widehat{Z_{\omega}}$, we see that
\[
csr(\T^{\ell} B(x)) \leq \biggl\lceil \frac{\ell}{2}\biggr\rceil + 1.
\]
Together with \cref{eqn: csr_group} and \cref{eqn: csr_a_theta}, this gives the required inequality.
\end{proof}

Note that we may as well have proved more. If $G$ is a central extension as above and $A = C^{\ast}(G)$, then by \cref{rem: kirchberg}, $\T A$ is a $C(\widehat{N})$-algebra, each of whose fibers are of the form $\T C^{\ast}(Q,\omega)$. Hence, the same argument shows that
\[
csr(\T A) \leq \biggl\lceil \frac{n+m+1}{2} \biggr\rceil + 1.
\]
Note that $csr(A) \leq csr(\T A)$ by \cref{rem: csr_increasing}, and $gsr(\T A)\leq csr(\T A)$ by Property (2) of \cref{rem: gsr_csr_properties}. Hence, by \cref{thm: k_1}, we see that
\[
K_1(A) \cong GL_k(A)/GL_k^0(A), \text{ where } k = \biggl\lceil \frac{n+m+1}{2} \biggr\rceil .
\]
As a simple example, consider $G$ to be the integer Heisenberg group \cite[Example 1.4 (1)]{packer}. Here, $G$ is a central extension
\[
0 \to \Z \to G \to \Z^2 \to 0
\]
so $n=1$ and $m=2$. Thus,
\[
K_1(A) \cong GL_2(A)/GL_2^0(A).
\]
\begin{remark}
To put our results in perspective, we consider the extreme cases of \cref{mainthm: groups}, namely when either $N$ or $Q$ is trivial. If $N$ is trivial, then $C^{\ast}(G) = C^{\ast}(Q) \cong C(\T^m)$. If $X$ is a compact Hausdorff space of dimension $m$, then
\[
csr(C(X)) \leq \biggl\lceil\frac{m}{2}\biggr\rceil + 1
\]
by a result of Nistor \cite[Corollary 2.5]{nistor}. Furthermore, Nica has shown that this upper bound is attained if the top cohomology group in $H^{odd}(X)$ is non-vanishing \cite[Theorem 5.3]{nica}. In particular, we conclude that
\[
csr(C^{\ast}(G)) = csr(C(\T^m)) = \biggl\lceil\frac{m}{2}\biggr\rceil + 1
\]
Now, if $Q$ is trivial, then $C^{\ast}(G) = C^{\ast}(N) \cong B\otimes C(\T^n)$ for some finite dimensional C*-algebra $B$. Since tensoring by a finite dimensional C*-algebra lowers the connected stable rank (by \cref{rem: gsr_csr_properties}), we conclude that
\[
csr(C^{\ast}(G)) = csr(B\otimes C(\T^n)) \leq csr(C(\T^n)) = \biggl\lceil\frac{n}{2}\biggr\rceil + 1
\]
\end{remark}
\section{Crossed Products by Finite groups}

Let $\alpha : G\to \text{Aut}(A)$ be an action of a finite group $G$ on a unital $C^{\ast}$-algebra $A$. The goal of this section is to estimate the homotopical stable ranks of the crossed product $A\rtimes_{\alpha} G$ in terms of the ranks of $A$.

A result of Jeong et al. \cite{jeong} states that the topological stable rank of $A\rtimes_{\alpha} G$ may be estimated by the formula
\[
tsr(A\rtimes_{\alpha} G) \leq tsr(A) + |G|-1.
\]
So by Property (3) of \cref{rem: gsr_csr_properties}, we conclude that
\[
csr(A\rtimes_{\alpha} G) \leq tsr(A) + |G|.
\]
Our first theorem is an improvement on this estimate in the case when $A$ has topological stable rank one, and builds on the ideas of \cite[Theorem 7.1]{rieffel}. Recall that $A\rtimes_{\alpha} G$ is generated by a copy of $A$ and unitaries $\{u_g : g\in G\}$ such that $u_gu_h = u_{gh}, u_{g^{-1}} = u_g^{\ast}$, and $u_gau_{g^{-1}} = \alpha_g(a)$ for all $g,h\in G$ and $a\in A$.

\begin{theorem}\label{thm: csr_tsr_1}
Let $G$ be a non-trivial finite group and let $\alpha : G\to \text{Aut}(A)$ be an action of $G$ on a unital $C^{\ast}$-algebra $A$. If $tsr(A) = 1$, then
\[
csr(A \rtimes_{\alpha}G) \leq |G|.
\]
\end{theorem}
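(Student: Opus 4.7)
My plan is to show directly that $GL_m^0(B)$ acts transitively on $Lg_m(B)$ for every $m \geq n := |G|$, where $B = A \rtimes_\alpha G$; this yields $csr(B) \leq n$. Fix such an $m$ and a left-unimodular vector $v = (v_1, \dots, v_m) \in Lg_m(B)$. The goal is to produce $T \in GL_m^0(B)$ with $Tv = e_m$.

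The first step is a standard reduction: it suffices to construct $T' \in GL_m^0(B)$ such that $(T'v)_m$ is invertible in $B$. Once the last coordinate is invertible---call it $w$---the elementary matrices $I_m + b E_{i,m}$ (always in $GL_m^0(B)$ via the straight-line path $t \mapsto I_m + tb E_{i,m}$) clear out the entries for $i<m$, and the Whitehead-type element $\mathrm{diag}(w, 1, \dots, 1, w^{-1}) \in GL_m^0(B)$ (which lies in $GL_m^0(B)$ for $m \geq 2$, satisfied since $|G| \geq 2$) rescales the last entry to $1$. The main work is then to build $T'$. Following Rieffel's approach, I would use the $A$-module decomposition $B = \bigoplus_{g \in G} A u_g$ and expand each $v_i = \sum_{g \in G} v_{i,g} u_g$ with $v_{i,g} \in A$. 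The hypothesis $m \geq n$ allows me to perform a sequence of elementary row operations with entries of the form $a u_g$ to consolidate the group-unitary content from the first $n-1$ rows into the last one. Left-unimodularity of $v$, viewed through the canonical conditional expectation $E : B \to A$, $E(\sum_g a_g u_g) = a_e$, ensures that the $A$-coefficients assembled into the new last entry carry left-unimodular information. Since $tsr(A) = 1$, a final small elementary-matrix perturbation produces a last entry of the form $a + \sum_{g \neq e} c_g u_g$ with $a \in A^{-1}$ and $\sum_{g \neq e} \|a^{-1} c_g\|$ strictly less than $1$, so a Neumann-series argument makes this element invertible in $B$, as required.

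The main obstacle is the precise combinatorial choice of the $a u_g$ shifts in the row operations, so that after consolidation the $A$-coefficients forming the $e$-component of the transformed last entry constitute a left-unimodular tuple in $A$, enabling the density argument coming from $tsr(A) = 1$. This parallels Rieffel's polynomial-reduction argument for $\mathbb{Z}$-crossed products in his Theorem 7.1: the role played there by the powers $u^k$ is played here by the finite family $\{u_g : g \in G\}$, and the finiteness of $G$ together with $m \geq |G|$ ensures that the reduction terminates. Keeping all intermediate operations inside $GL_m^0(B)$ is automatic, since each step is realized by either an elementary matrix or a Whitehead-type diagonal element.
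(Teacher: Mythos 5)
Your setup is right---reducing to transitivity of $GL_m^0(B)$ on $Lg_m(B)$ for $m\geq |G|$, expanding along $B=\bigoplus_{g}Au_g$, and using elementary matrices with entries $au_g$ as the basic moves---but the two steps you yourself flag as the ``main work'' are exactly where the proof lives, and as described they do not go through. First, the consolidation step: to clear a coefficient $v_{i,g}$ using a row operation $au_h\cdot v_j$ you need an appropriate $A$-coefficient of $v_j$ to be (one-sidedly) invertible, and $tsr(A)=1$ only lets you \emph{perturb} it to be invertible. A perturbation of $v$ is not implemented by an element of $GL_m^0(B)$, so after perturbing you are working with a different vector and you have lost the connection to the original orbit. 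The paper's mechanism for handling this is the one ingredient your outline is missing: one works inside the orbit $V=GL_m^0(B)v$, which is \emph{open} in $Lg_m(B)$ by Corach--Larotonda, so small perturbations stay in $V$; one then picks an element of $V$ of minimal total ``length'' (number of nonzero $u_g$-coefficients, summed over coordinates), and shows by a two-case analysis (two coordinates of equal length, or all lengths distinct---the latter forcing $m=|G|$ by pigeonhole) that minimality is contradicted unless some coordinate is exactly $0$. That zero coordinate is then replaced by $\epsilon\cdot 1$, which is invertible for free.

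Second, your endgame is flawed: you aim for a last entry $a+\sum_{g\neq e}c_gu_g$ with $a$ invertible and $\sum_{g\neq e}\|a^{-1}c_g\|<1$ so that a Neumann series gives invertibility, but nothing in the construction controls the size of the $c_g$ relative to $a$, so this condition cannot be arranged in general. Relatedly, applying the conditional expectation $E$ to $\sum_ib_iv_i=1$ yields that the family $\{\alpha_{g^{-1}}(v_{i,g})\}_{i,g}$ of \emph{all} $m|G|$ coefficients is left unimodular in $A$, not that the $m$ identity-components are; compressing this to a usable unimodular tuple is again the unresolved combinatorial core. So the proposal identifies the right objects but is missing the key idea (minimal length in the open orbit) and its final step would fail as stated.
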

\begin{proof}
We write $B := A\rtimes_{\alpha} G$, and enumerate $G$ as $\{g_0,g_1,\ldots, g_n\}$ where $g_0$ denotes the identity element of $G$. For each $b\in B$, there is a unique expansion
\[
b = \sum_{i=0}^n a_iu_{g_i}
\]
where $a_i \in A$. So we define the length of $b$ to be $L(b) := 1 + \max\{i \in \{0,1,\ldots, n\} : a_i \neq 0\}$, with the convention that $L(0) = 0$. For a vector $\underline{b} = (b_1,b_2,\ldots, b_m) \in B^m$, we write $L(\underline{b}) := \sum_{j=1}^m L(b_j)$.

Now fix $m\geq |G|$. We wish to show, as before, that $GL_m^0(B)$ acts transitively on $Lg_m(B)$. So fix $v\in Lg_m(B)$ and we wish to prove that there exists $S\in GL_m^0(B)$ such that $S(v) = e_1$ where $e_1 = (1_B,0,0,\ldots, 0)$. So consider
\[
V := GL_m^0(B)(v).
\]
Then $V$ is an open subset of $Lg_m(B)$ by \cite[Theorem 1]{corach2}, and hence of $B^m$. Let  $\underline{b} = (b_1,b_2,\ldots, b_m) \in V$ be a vector of minimal length in $V$. In other words, $s := L(\underline{b}) = \min\{L(\underline{z}) : \underline{z} \in V\}$. We claim that $b_i = 0$ for some $1\leq i\leq m$.

Suppose not, then choose $\epsilon > 0$ such that, for any vector $\underline{h} \in B^m$, $\|\underline{h} - \underline{b}\| < \epsilon$ implies that $\underline{h} \in V$, and write
\[
b_i = \sum_{j=0}^{t_i} a_{i,j} u_{g_j}
\]
where $t_i = L(b_i) -1$. By multiplying by permutation matrices if needed (note that this does not alter the value of $L(\underline{b})$), we may assume that
\begin{equation}\label{eqn: increasing_length}
0\leq t_1\leq t_2\leq \ldots \leq t_m.
\end{equation}
We now consider two cases:
\begin{enumerate}
\item Suppose first that there exists $1\leq i\leq m$ such that $t_i = t_{i+1}$. Once again multiplying by a permutation matrix, we may assume that $i=m-1$. (Note that the inequalities in \cref{eqn: increasing_length} need not hold after doing this). Now $tsr(A) = 1$, so there exists $x \in Lg_1(A)$ such that $\|x - a_{m-1,t_{m-1}}\| < \epsilon$. Consider
\[
\underline{h} = (b_1, b_2, \ldots, b_{m-2}, b_{m-1}', b_m)
\]
where $b_{m-1}' = \sum_{j=0}^{t_{m-1}-1} a_{m-1,j}u_{g_j} + xu_{g_{t_{m-1}}}$. Then, $\|\underline{h} - \underline{b}\| < \epsilon$, whence $\underline{h} \in V$. Furthermore, there exists $y\in A$ such that $yx = -a_{m,t_m}$, so consider
\[
\widetilde{T} := \begin{pmatrix}
1 & 0 & 0 & \ldots & 0 & 0 \\
0 & 1 & 0 & \ldots & 0 & 0 \\
\vdots & \vdots & \vdots & \vdots & \vdots & \vdots \\
0 & 0 & 0 & \ldots & 1 & 0 \\
0 & 0 & 0 & \ldots & y & 1
\end{pmatrix}
\]
then $\widetilde{T} \in GL_m^0(B)$ and $\widetilde{T}(\underline{h}) = \underline{h'} = (b_1,b_2,\ldots, b_{m-1}',b_m')$ where
\[
b_m' = \sum_{j=0}^{t_m-1} (ya_{m-1,j} + a_{m,j})u_{g_j}.
\]
This implies that $\underline{h'}\in V$ and $L(\underline{h'}) < L(\underline{b})$, contradicting the minimality of $L(\underline{b})$.
\item Now suppose there is no $1\leq i\leq m$ such that $t_i = t_{i+1}$, then by the Pigeon-hole principle, $m\leq |G|$, so it must happen that $m = |G|$. Since $G$ is non-trivial, $m\geq 2$, and it follows that
\[
b_1 = a_{1,0}u_{g_0} \text{ and } b_2 = a_{2,0}u_{g_0} + a_{2,1}u_{g_1}
\]
where $a_{1,0}\neq 0$ and $a_{2,1}\neq 0$. Since $tsr(A) = 1$, there exists $x\in A$ such that $\|x-a_{1,0}\| < \epsilon$ and $\alpha_{g_1}(x) \in Lg_1(A)$. Consider $\underline{h}=(b_1',b_2,b_3,\ldots, b_m)$ where
\[
b_1' = xu_{g_0}.
\]
Then $\|\underline{h} - \underline{b}\| < \epsilon$, so $\underline{h} \in V$. Furthermore, there exists $y\in A$ such that $y\alpha_{g_1}(x) = -a_{2,1}$, so if
\[
\widetilde{T} := \begin{pmatrix}
1 & 0 & 0 & \ldots & 0 & 0 \\
yu_{g_1} & 1 & 0 & \ldots & 0 & 0 \\
\vdots & \vdots & \vdots & \vdots & \vdots & \vdots \\
0 & 0 & 0 & \ldots & 1 & 0 \\
0 & 0 & 0 & \ldots & 0 & 1
\end{pmatrix}
\]
then $\widetilde{T} \in GL_m^0(B)$ and $\widetilde{T}(\underline{h}) = \underline{h'} = (b_1',b_2',b_3,\ldots, b_m)$ where
\[
b_2' = a_{2,0}u_{g_0}.
\]
Now observe that $\underline{h'} \in V$ and $L(\underline{h'}) < L(\underline{b})$, contradicting the minimality of $L(\underline{b})$.
\end{enumerate}
Hence, it follows that there is some $1\leq i\leq m$ such that $b_i = 0$. Then, multiplying by a permutation matrix once again, we assume that $b_1 = 0$. Set $\underline{b'} := (\epsilon, b_2,b_3,\ldots, b_m)$, then $\|\underline{b'} - \underline{b}\| < \epsilon$, so $\underline{b'}\in V$. Now set
\[
Q := \begin{pmatrix}
\frac{1}{\epsilon} & 0 & 0 & \ldots & 0 & 0 \\
\frac{-1}{\epsilon}b_2 & 1 & 0 & \ldots & 0 & 0 \\
\vdots & \vdots & \vdots  & \ldots & \vdots & \vdots \\
\frac{-1}{\epsilon}b_m & 0 & 0 & \ldots & 0 & 1
\end{pmatrix} \in GL_m^0(B).
\]
Then $Q(\underline{b'}) = e_1$, so $e_1 \in V = GL_m^0(B)v$. Thus, $GL_m^0(B)$ acts transitively on $Lg_m(B)$, whence $csr(A) \leq m$ as required.
\end{proof}

\subsection{Rokhlin Actions}

In this, the final section of the paper, our goal is to prove \cref{mainthm: rokhlin}. The following definition of the Rokhlin property is different from the original definition due to Izumi \cite[Definition 3.1]{izumi}, but the two are equivalent if the underlying algebra is separable (See \cite[Theorem 5.26]{phillips2}).

\begin{definition}\label{defn: rokhlin}
Let $\alpha : G\to \text{Aut}(A)$ be an action of a finite group $G$ on a unital, separable $C^{\ast}$-algebra $A$. We say that $\alpha$ has the \emph{Rokhlin property} if, for every finite set $F\subset A$ and every $\epsilon > 0$, there are mutually orthogonal projections $\{e_g : g\in G\} \subset A$ such that
\begin{enumerate}
\item $\alpha_g(e_h) = e_{gh}$ for all $g,h\in G$
\item $\|e_ga - ae_g\| < \epsilon$ for all $g\in G, a\in F$
\item $\sum_{g\in G} e_g = 1.$
\end{enumerate}
\end{definition}

The Rokhlin property may be thought of as a notion of freeness of the action, and has a number of interesting properties (See \cite[Chapter 13]{phillips}). In the context of noncommutative dimension, one result is known: Osaka and Phillips have shown in \cite{osaka} that a variety of different classes of $C^{\ast}$-algebras are closed under crossed products by finite group actions with the Rokhlin property. In particular, if $A$ has topological stable rank one or real rank zero, then so does $A\rtimes_{\alpha} G$. We prove an analogous result for homotopical stable ranks. In fact, we estimate these ranks for the crossed product $C^{\ast}$-algebra in terms of those of $A$.

In what follows, if $\alpha : G\to \text{Aut}(A)$ is an action of a group $G$ on a C*-algebra $A$, we write $A^{\alpha}$ to denote the fixed point subalgebra of $A$. To begin with, we need a result due to Gardella \cite{gardella_rokhlin}. We are grateful to the referee for pointing out this result (and the subsequent line of reasoning) to us, as it considerably simplified our original argument.

\begin{theorem}\cite[Theorem 2.11]{gardella_rokhlin}\label{lem: gardella}
Let $\alpha : G\to \text{Aut}(A)$ be an action of a finite group $G$ on unital, separable C*-algebra $A$ with the Rokhlin property. Then, there is a sequence of unital,  completely positive, contractive linear maps $\psi_n : A\to A^{\alpha}$ such that, for all $a,b\in A$, we have
\[
\lim_{n\to \infty}\|\psi_n(ab) - \psi_n(a)\psi_n(b)\| = 0
\]
and, for all $a\in A^{\alpha}$, we have $\lim_{n\to \infty}\|\psi_n(a) - a\| = 0$.
\end{theorem}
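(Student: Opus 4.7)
The plan is to construct each $\psi_n$ directly from the Rokhlin projections supplied by \cref{defn: rokhlin}, and then to use separability to diagonalise over shrinking tolerances. Since $A$ is separable, so is its C*-subalgebra $A^{\alpha}$; fix countable dense sequences $(a_k)$ in $A$ and $(b_k)$ in $A^{\alpha}$, and set $F_n := \{a_1,\ldots,a_n,b_1,\ldots,b_n\}$ together with a tolerance $\epsilon_n := 1/(n|G|^2(1+\max_{x\in F_n}\|x\|))$. Apply \cref{defn: rokhlin} to $(F_n,\epsilon_n)$ to obtain projections $\{e_g^{(n)}:g\in G\}$ satisfying (1)--(3), and define
\[
\psi_n(a) := \sum_{g\in G} e_g^{(n)}\,\alpha_g(a)\,e_g^{(n)}.
\]
Each summand is a compression of the automorphism $\alpha_g$ by a projection, hence completely positive, and the sum is unital because $\sum_g e_g^{(n)} = 1$ and the projections are pairwise orthogonal; thus $\psi_n$ is UCP and contractive. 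The equivariance $\alpha_h(e_g^{(n)}) = e_{hg}^{(n)}$ yields $\alpha_h\circ\psi_n = \psi_n$ for every $h\in G$ after reindexing the sum, so $\psi_n$ maps into $A^{\alpha}$ without any further averaging.

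For approximate multiplicativity on $F_n$, orthogonality $e_g^{(n)}e_h^{(n)} = \delta_{gh}\,e_g^{(n)}$ collapses $\psi_n(a)\psi_n(b)$ to $\sum_g e_g^{(n)}\alpha_g(a)e_g^{(n)}\alpha_g(b)e_g^{(n)}$. The crucial observation is that approximate commutation of $e_g^{(n)}$ with $\alpha_g(b)$ is automatic: applying the isometric automorphism $\alpha_{g^{-1}}$ gives $\|e_g^{(n)}\alpha_g(b)-\alpha_g(b)e_g^{(n)}\| = \|e_e^{(n)}b-be_e^{(n)}\| < \epsilon_n$, where $e$ denotes the identity of $G$ and only hypothesis (2) of \cref{defn: rokhlin} applied to $F_n$ is used. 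Pushing the middle $e_g^{(n)}$ past $\alpha_g(b)$ then yields $\psi_n(ab)\approx\psi_n(a)\psi_n(b)$ with error of order $|G|^2\max\{\|x\|:x\in F_n\}\epsilon_n = O(1/n)$. Similarly, for $a\in F_n\cap A^{\alpha}$ the identity $\alpha_g(a)=a$ reduces $\psi_n(a)$ to $\sum_g e_g^{(n)} a e_g^{(n)}$, which differs from $\sum_g a e_g^{(n)} = a$ by at most $|G|\epsilon_n$.

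A standard $\epsilon/3$ argument using contractivity of the $\psi_n$ together with density of the sequences $(a_k)$ and $(b_k)$ then promotes both asymptotic relations from the test sets to all of $A$ and $A^{\alpha}$ respectively. The step requiring most care is the reduction of approximate commutation from $F_n$ to all of $\alpha_g(F_n)$: handling this via direct enlargement of $F_n$ would force a $|G|$-dependent blow-up of the test set, whereas the identity $\|e_g^{(n)}\alpha_g(b) - \alpha_g(b)e_g^{(n)}\| = \|e_e^{(n)}b - be_e^{(n)}\|$ exploits the equivariant permutation of the Rokhlin projections to avoid any such enlargement. I expect this equivariance trick to be the only non-routine point in the argument.
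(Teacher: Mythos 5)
This statement is imported verbatim from \cite[Theorem 2.11]{gardella_rokhlin}; the paper offers no proof of its own, so there is nothing in-paper to compare against. Your construction $\psi_n(a)=\sum_{g}e_g^{(n)}\alpha_g(a)e_g^{(n)}$ is the standard one (it coincides with $\sum_g\alpha_g\bigl(e_1^{(n)}ae_1^{(n)}\bigr)$, which is essentially Gardella's averaging over the Rokhlin tower), and every step checks out: exact invariance of the image under $\alpha$ from condition (1) of \cref{defn: rokhlin}, unitality and complete positivity from (3) and orthogonality, the commutator identity $\|[e_g^{(n)},\alpha_g(b)]\|=\|[e_1^{(n)},b]\|$ via the isometry $\alpha_{g^{-1}}$, and the $\epsilon/3$ upgrade from the dense test sets using uniform contractivity. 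The argument is correct as written.
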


\begin{lemma}\label{lem: fixed_point}
Let $\alpha : G\to \text{Aut}(A)$ be an action of a finite group $G$ on a unital, separable C*-algebra with the Rokhlin property. Then
\[
gsr(A^{\alpha}) \leq gsr(A) \text{ and } csr(A^{\alpha}) \leq csr(A).
\]
\end{lemma}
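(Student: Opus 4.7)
The plan is to show that, for each $n \geq csr(A)$ (resp.\ $n \geq gsr(A)$) and every $v \in Lg_n(A^{\alpha})$, some $T \in GL_n^0(A^{\alpha})$ (resp.\ $T \in GL_n(A^{\alpha})$) satisfies $Tv = e_n$. Since the inclusion $A^{\alpha} \hookrightarrow A$ preserves unimodularity, the hypothesis on $A$ produces $S \in GL_n^0(A)$ (resp.\ $S \in GL_n(A)$) with $Sv = e_n$. I would then push $S$ back into $M_n(A^{\alpha})$ using the ucp maps $\psi_k : A \to A^{\alpha}$ of \cref{lem: gardella}: denote by $\psi_k$ also the entrywise extension to matrices and vectors over $A$, which is a ucp contraction that inherits both asymptotic multiplicativity on $M_n(A)$ and the asymptotic identity property on $M_n(A^{\alpha})$.

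The next step is to establish two claims for all large $k$: (i) $\psi_k(S) \in GL_n^0(A^{\alpha})$, and (ii) $\psi_k(S)v \to e_n$. Claim (ii) combines the two properties of $\psi_k$: asymptotic multiplicativity gives $\psi_k(S)\psi_k(v) - \psi_k(Sv) \to 0$, while $\psi_k(v) \to v$ (since $v$ has entries in $A^{\alpha}$) and $\psi_k(Sv) = \psi_k(e_n) = e_n$. For (i), invertibility of $\psi_k(S)$ follows from $\psi_k(S)\psi_k(S^{-1}) \to \psi_k(I_n) = I_n$, so that $\psi_k(S)\psi_k(S^{-1})$ is eventually in $GL_n^0(A^{\alpha})$ and hence $\psi_k(S)$ is right-invertible; left-invertibility is symmetric. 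The delicate part is placing $\psi_k(S)$ in the \emph{connected component} of the identity: I would fix a continuous path $t \mapsto S_t$ from $I_n$ to $S$ inside $GL_n(A)$, apply $\psi_k$ entrywise, and argue that for $k$ large the resulting path in $M_n(A^{\alpha})$ lies entirely inside $GL_n(A^{\alpha})$. This requires upgrading the pointwise asymptotic multiplicativity of $\psi_k$ to uniform convergence over the compact set $K := \{(S_t, S_t^{-1}) : t \in [0,1]\}$; a standard $\varepsilon/3$ argument accomplishes this, using that each extended $\psi_k$ is a contraction and covering $K$ by finitely many small balls.

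Once $\|\psi_k(S)v - e_n\| < 1/n$, \cref{lem:open_orbit} produces $R \in GL_n^0(A^{\alpha})$ with $R\psi_k(S)v = e_n$, so $R\psi_k(S) \in GL_n^0(A^{\alpha})$ sends $v$ to $e_n$, yielding $csr(A^{\alpha}) \leq csr(A)$. The inequality $gsr(A^{\alpha}) \leq gsr(A)$ is proved identically, dropping all references to connected components: the path step in claim (i) disappears, since only invertibility of $\psi_k(S)$ is required, and $R\psi_k(S) \in GL_n(A^{\alpha})$ is automatic. Accordingly, the main obstacle lies entirely in the csr case, namely the uniform-convergence upgrade of \cref{lem: gardella} that keeps the whole path $\psi_k(S_t)$ simultaneously inside the invertible group.
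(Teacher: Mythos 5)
Your proposal is correct, and it rests on the same key input as the paper: Gardella's approximately multiplicative u.c.p.\ maps $\psi_k : A \to A^{\alpha}$ from \cref{lem: gardella}, inflated to matrices, used to push a solution $S$ of $S\underline{v}=e_n$ from $A$ down into $M_n(A^{\alpha})$. The differences are in how the transfer is justified, and here your version is actually the more careful one. The paper asserts $\|\psi_j^{(n)}(T)-T\|\to 0$ for $T\in GL_n(A)$ and concludes $S:=\psi_j^{(n)}(T)$ is invertible and moves $\underline{v}$ close to $e_n$, finishing with the closedness of $GL_n(A^{\alpha})$- and $GL_n^0(A^{\alpha})$-orbits (Corach--Larotonda and Rieffel); but \cref{lem: gardella} only guarantees $\psi_k(a)\to a$ for $a\in A^{\alpha}$, so that convergence is not literally licensed when $T$ has entries outside $A^{\alpha}$. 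Your route around this --- deriving $\psi_k(S)\underline{v}\to e_n$ from asymptotic multiplicativity together with $\psi_k(\underline{v})\to \underline{v}$ and $\psi_k(e_n)=e_n$, and invertibility from $\psi_k(S)\psi_k(S^{-1})\to I_n$ --- is exactly what is needed. Likewise, your path-plus-uniform-convergence argument (equicontinuity of the multiplicativity defect on the compact set $\{(S_t,S_t^{-1})\}$, which holds because the inflated $\psi_k$ are contractions) supplies the membership $\psi_k(S)\in GL_n^0(A^{\alpha})$ that the csr case genuinely requires and that the paper leaves implicit; note that closeness of $\psi_k(S)$ to an element of $GL_n^0(A)$ would only place it in $GL_n^0(A)\cap GL_n(A^{\alpha})$, not in $GL_n^0(A^{\alpha})$. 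Your final step via \cref{lem:open_orbit} replaces the paper's appeal to closed orbits; both are fine, and yours has the small advantage of producing the required element of $GL_n^0(A^{\alpha})$ explicitly.
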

\begin{proof}
Let $\psi_j : A\to A^{\alpha}$ be a sequence of approximately multiplicative maps as in \cref{lem: gardella}. For each $n \in \N$, let $\psi_j^{(n)} : M_n(A)\to M_n(A^{\alpha})$ denote its inflation.

We begin with the $gsr$ inequality. We assume that $gsr(A) < \infty$ and let $n \geq gsr(A)$, and fix $\underline{v} \in Lg_n(A^{\alpha})$, and $\epsilon > 0$. By hypothesis, there exists $T \in GL_n(A)$ such that $T\underline{v} = e_n$. Now choose $\eta > 0$ so that if $S \in M_n(A^{\alpha})$ such that $\|S - T\| < \eta$, then $S \in GL_n(A^{\alpha})$. Furthermore, we may assume that $\eta\|\underline{v}\| < \epsilon$.

Then, since $\|\psi_j^{(n)}(T) - T\| \to 0$ as $j\to \infty$, we may choose $j\in \N$ such that, if $S := \psi_j^{(n)}(T)$, then $\|S - T\| < \eta$. This implies that $S \in GL_n(A^{\alpha})$ and that
\[
\|S\underline{v} - e_n\| = \|S\underline{v} - T\underline{v}\| < \eta\|\underline{v}\| < \epsilon.
\]
This is true for every $\epsilon > 0$. Since the action of $GL_n(A^{\alpha})$ on $Lg_n(A^{\alpha})$ has closed orbits by \cite[Theorem 1]{corach2}, we conclude that $GL_n(A^{\alpha})$ acts transitively on $Lg_n(A^{\alpha})$ as required.

The proof of the $csr$ inequality is similar, except we need the fact that the $GL_n^0(A^{\alpha})$ orbits are also closed, by \cite[Theorem 8.3]{rieffel}.
\end{proof}

The next lemma is a simpler version of a result due to Gardella, and we state it in the form that we need.

\begin{lemma}\cite[Theorem 3.8]{gardella_compact}\label{lem: tensor_rokhlin}
Let $\alpha : G\to \text{Aut}(A)$ be an action of a finite group on a separable, unital $C^{\ast}$-algebra $A$, and let $\beta : G\to \text{Aut}(B)$ be an action of $G$ on a unital, nuclear C*-algebra $B$. Let $\alpha\otimes \beta : G\to \text{Aut}(A\otimes B)$ denote the tensor product action $(\alpha\otimes \beta)_g = \alpha_g\otimes \beta_g$. If $\alpha$ has the Rokhlin property, then so does $\alpha\otimes \beta$.
\end{lemma}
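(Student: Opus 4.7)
The plan is to construct Rokhlin projections for $\alpha \otimes \beta$ by tensoring the Rokhlin projections for $\alpha$ with the unit of $B$. Given a finite set $F \subset A \otimes B$ and $\epsilon > 0$, I would first approximate each $x \in F$ by a finite sum of elementary tensors $y_x = \sum_{i=1}^{N_x} a_i^{(x)} \otimes b_i^{(x)}$ with $\|x - y_x\| < \epsilon/3$, gather the finitely many $a_i^{(x)}$ into a single finite set $F' \subset A$, and let $M$ bound the norms of the $b_i^{(x)}$ and $N$ bound the integers $N_x$. Applying the Rokhlin property of $\alpha$ to $F'$ with tolerance $\delta := \epsilon/(3NM)$ yields mutually orthogonal projections $\{e_g\}_{g \in G} \subset A$ satisfying $\alpha_g(e_h) = e_{gh}$, $\sum_g e_g = 1_A$, and $\|e_g a - a e_g\| < \delta$ for all $a \in F'$ and $g \in G$.

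Now set $\tilde{e}_g := e_g \otimes 1_B \in A \otimes B$. These are automatically mutually orthogonal projections with $\sum_g \tilde{e}_g = 1_{A \otimes B}$, and since $\beta_g$ fixes the unit,
\[
(\alpha_g \otimes \beta_g)(\tilde{e}_h) = \alpha_g(e_h) \otimes \beta_g(1_B) = e_{gh} \otimes 1_B = \tilde{e}_{gh},
\]
so conditions (1) and (3) of \cref{defn: rokhlin} hold. Condition (2) follows from the triangle inequality: for $x \in F$,
\[
\|\tilde{e}_g x - x \tilde{e}_g\| \leq 2\|x - y_x\| + \sum_{i=1}^{N_x} \|e_g a_i^{(x)} - a_i^{(x)} e_g\| \cdot \|b_i^{(x)}\| < \tfrac{2\epsilon}{3} + NM\delta = \epsilon.
\]

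The argument is essentially bookkeeping; the main point to watch is the implicit use of the nuclearity of $B$, which guarantees a unique $C^{\ast}$-tensor product together with a well-defined action $\alpha \otimes \beta$ on it, so that the approximation of elements of $A \otimes B$ by finite sums of elementary tensors is well-behaved in norm. Conceptually, the Rokhlin projections witnessing freeness of $\alpha$ can be \emph{trivially extended} to witnesses for $\alpha \otimes \beta$ because they live entirely in the $A$-factor, and $\beta$ interacts with them only through its action on $1_B$, which is trivial. There is no genuine obstacle — the nuclearity hypothesis on $B$ plays only a housekeeping role, and the hypothesis on $\beta$ is used only insofar as $\beta_g(1_B) = 1_B$.
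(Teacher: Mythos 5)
Your proof is correct. The paper itself gives no argument for this lemma --- it simply cites Gardella's result, which is stated and proved there for actions of general compact groups, where the Rokhlin property must be formulated in terms of sequence (or central sequence) algebras and the bookkeeping is correspondingly heavier. For finite groups with the projection-based formulation of \cref{defn: rokhlin}, your direct construction --- approximate elements of $F$ by finite sums of elementary tensors, run the Rokhlin property of $\alpha$ on the resulting finite set of $A$-legs with tolerance $\epsilon/(3NM)$, and take $\tilde{e}_g := e_g \otimes 1_B$ --- is exactly the right elementary argument, and your norm estimate $\|\tilde{e}_g y_x - y_x\tilde{e}_g\| \leq \sum_i \|e_g a_i^{(x)} - a_i^{(x)} e_g\|\,\|b_i^{(x)}\| < NM\delta$ is valid since $\|c \otimes b\| = \|c\|\,\|b\|$ in any $C^{\ast}$-tensor norm. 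Your remark about nuclearity is also apt: it is there only to make $A \otimes B$ and the action $\alpha\otimes\beta$ unambiguous, and your proof would work verbatim on either the minimal or maximal tensor product. What the citation to Gardella buys the paper is generality (compact groups, and uniformity over the various equivalent formulations of the Rokhlin property); what your argument buys is a short, self-contained proof covering the only case actually used here, namely $B = \mathcal{B}(\ell^2(G))$ finite-dimensional.
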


We are now in a position to prove \cref{mainthm: rokhlin}.

\begin{theorem}
Let $\alpha : G\to \text{Aut}(A)$ be an action of a finite group $G$ on a separable, unital $C^{\ast}$-algebra $A$ with the Rokhlin property. Then
\[
csr(A\rtimes_{\alpha} G) \leq \biggl\lceil \frac{csr(A) - 1}{|G|}\biggr\rceil + 1
\]
and
\[
gsr(A\rtimes_{\alpha} G) \leq \biggl\lceil \frac{gsr(A) - 1}{|G|}\biggr\rceil + 1.
\]
In particular, if $csr(A) = 1$ or $gsr(A) = 1$, then the same is true for $A\rtimes_{\alpha} G$.
\end{theorem}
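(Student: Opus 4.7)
The plan is to realize $A\rtimes_{\alpha} G$ as the fixed-point algebra of a Rokhlin action on a matrix algebra over $A$, and then conclude by combining \cref{lem: fixed_point} with Property (3) of \cref{rem: gsr_csr_properties}. The crucial structural input is a classical (Landstad-type) isomorphism
\[
A\rtimes_{\alpha} G \;\cong\; (A\otimes M_{|G|})^{\gamma},
\]
where $\gamma_g := \alpha_g\otimes \text{Ad}(\lambda_g)$ and $\lambda$ is the left regular representation of $G$ on $\ell^2(G)$. With $\{e_{h,k}\}_{h,k\in G}$ denoting the matrix units of $M_{|G|} = B(\ell^2(G))$, one checks directly that the assignment $\sum_{s\in G} b_s u_s \mapsto \sum_{h,k\in G} \alpha_h(b_{h^{-1}k})\otimes e_{h,k}$ is a unital $\ast$-isomorphism of $A\rtimes_{\alpha} G$ onto $(A\otimes M_{|G|})^{\gamma}$.

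Once this identification is in place, the remainder is a short chain. Since $\text{Ad}(\lambda)$ is an action of $G$ on the (finite-dimensional, hence) nuclear $C^{\ast}$-algebra $M_{|G|}$, \cref{lem: tensor_rokhlin} upgrades the Rokhlin property from $\alpha$ to $\gamma$. The algebra $A\otimes M_{|G|}$ is still separable and unital, so \cref{lem: fixed_point} applies and gives $csr((A\otimes M_{|G|})^{\gamma})\leq csr(A\otimes M_{|G|}) = csr(M_{|G|}(A))$. Property (3) of \cref{rem: gsr_csr_properties} then yields
\[
csr(A\rtimes_{\alpha} G) \;\leq\; csr(M_{|G|}(A)) \;\leq\; \biggl\lceil\frac{csr(A)-1}{|G|}\biggr\rceil + 1.
\]
Running the identical argument with $gsr$ in place of $csr$ (using the $gsr$ statements in both \cref{lem: fixed_point} and Property (3)) delivers the $gsr$ bound. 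The final clause concerning $csr(A) = 1$ or $gsr(A) = 1$ is then immediate since $\lceil 0/|G|\rceil + 1 = 1$.

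The main obstacle is the clean verification of the displayed isomorphism. The formula is classical, but for non-abelian $G$ the bookkeeping requires some care: multiplicativity of the assignment ultimately rests on the identity $\alpha_h(b_{h^{-1}k})\alpha_k(c_{k^{-1}\ell}) = \alpha_h(b_{h^{-1}k}\,\alpha_{h^{-1}k}(c_{k^{-1}\ell}))$, which, after the substitution $r = h^{-1}\ell$ and summation over $k$, matches the convolution product $(\sum b_s u_s)(\sum c_t u_t) = \sum_r(\sum_s b_s \alpha_s(c_{s^{-1}r}))u_r$ in $A\rtimes_{\alpha} G$. Once this and the analogous computation for the involution are settled, the rest of the argument is a direct assembly of \cref{lem: tensor_rokhlin}, \cref{lem: fixed_point}, and \cref{rem: gsr_csr_properties}.
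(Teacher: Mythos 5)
Your proposal is correct and follows essentially the same route as the paper: identify $A\rtimes_{\alpha} G$ with the fixed-point algebra of $\alpha\otimes\mathrm{Ad}(\lambda)$ on $A\otimes M_{|G|}$, invoke \cref{lem: tensor_rokhlin} and \cref{lem: fixed_point}, and finish with the matrix-algebra estimate $csr(M_{|G|}(A))\leq\lceil (csr(A)-1)/|G|\rceil+1$ (and its $gsr$ analogue). The only difference is cosmetic --- you verify the duality isomorphism explicitly where the paper simply cites noncommutative duality, and you source the matrix inequality from Property (3) of \cref{rem: gsr_csr_properties} rather than from Rieffel and McConnell directly; both of your computations check out.
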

\begin{proof}
We begin with the $gsr$ inequality: Let $B := \mathcal{B}(\ell^2(G))$, and let $\text{Ad}(\lambda) : G\to \text{Aut}(B)$ denote the natural action induced by the left regular representation of $G$ on $\ell^2(G)$. Let $\alpha\otimes \text{Ad}(\lambda) : G\to \text{Aut}(A\otimes B)$ denote the tensor product action, which has the Rokhlin property by \cref{lem: tensor_rokhlin}. However, it follows from noncommutative duality that
\[
A\rtimes_{\alpha} G \cong (A\otimes B)^{\alpha\otimes \text{Ad}(\lambda)}.
\]
Hence, by \cref{lem: fixed_point}
\[
gsr(A\rtimes_{\alpha} G) \leq gsr(A\otimes B) = gsr(M_{|G|}(A)) \leq \biggl\lceil \frac{gsr(A) - 1}{|G|}\biggr\rceil +1
\]
where the last inequality follows from \cite[Corollary 11.5.13]{mcconnell} (Note that the rank $glr(A)$ used in \cite{mcconnell} is the same as $gsr(A) - 1$).

The $csr$ inequality is entirely similar, except at the very end, we need the fact that
\[
csr(M_{|G|}(A)) \leq \biggl\lceil \frac{csr(A) - 1}{|G|}\biggr\rceil + 1
\]
which was proved by Rieffel \cite[Theorem 4.7]{rieffel2}.
\end{proof}

We end with some examples that illustrate these results. 

\begin{corollary}
Let $\alpha : G\to \text{Aut}(A)$ be an action of a finite group $G$ on a unital, separable C*-algebra $A$ with the Rokhlin property. Then
\[
csr(\T B) \leq \biggl\lceil \frac{csr(\T A) - 1}{|G|}\biggr\rceil + 1
\]
and
\[
gsr(\T B) \leq \biggl\lceil \frac{gsr(\T A) - 1}{|G|}\biggr\rceil + 1.
\]
Hence, if
\[
n \geq \max\biggl\lbrace \biggl\lceil \frac{gsr(\T A)-1}{|G|}\biggr\rceil, \biggl\lceil\frac{csr(A)-1}{|G|} \biggr\rceil\biggr\rbrace
\]
then the natural map
\[
GL_n(B)/GL_n^0(B) \to K_1(B)
\]
is an isomorphism.
\end{corollary}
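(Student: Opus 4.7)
The plan is to recognize that the corollary is almost a direct application of \cref{mainthm: rokhlin} after a minor preliminary step: observing that $\T B$ itself can be written as a crossed product of $\T A$ by a Rokhlin action of $G$. The K-theoretic conclusion then follows by plugging the resulting estimates into \cref{thm: k_1}.

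The first step is to identify $\T B$ with a crossed product. Since the action $\alpha$ fixes scalars, the canonical isomorphism $C(\T)\otimes(A\rtimes_{\alpha}G)\cong(C(\T)\otimes A)\rtimes_{\mathrm{id}\otimes\alpha}G$ yields $\T B\cong \T A\rtimes_{\mathrm{id}\otimes\alpha}G$. Next, I would verify that the action $\mathrm{id}\otimes\alpha$ on $\T A$ has the Rokhlin property. This is immediate from \cref{lem: tensor_rokhlin}: take $B:=C(\T)$ with the trivial $G$-action, which is nuclear, so the product action $\alpha\otimes\mathrm{id}$ on $A\otimes C(\T)$ (which agrees with $\mathrm{id}\otimes\alpha$ on $C(\T)\otimes A$ after the flip) has the Rokhlin property. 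Applying the bounds of \cref{mainthm: rokhlin} to this Rokhlin action on $\T A$ delivers exactly the two inequalities
\[
csr(\T B)\leq\biggl\lceil\frac{csr(\T A)-1}{|G|}\biggr\rceil+1\quad\text{and}\quad gsr(\T B)\leq\biggl\lceil\frac{gsr(\T A)-1}{|G|}\biggr\rceil+1.
\]

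For the final assertion, I would combine the first inequality with \cref{mainthm: rokhlin} itself to control both $gsr(\T B)$ and $csr(B)$ in a single bound. By \cref{thm: k_1}, the natural map $GL_{m-1}(B)/GL_{m-1}^0(B)\to K_1(B)$ is an isomorphism whenever $m\geq\max\{csr(B),gsr(\T B)\}$. Writing $m=n+1$, this is ensured when
\[
n\geq\max\{csr(B)-1,\,gsr(\T B)-1\}.
\]
The estimates above give $csr(B)-1\leq\lceil(csr(A)-1)/|G|\rceil$ and $gsr(\T B)-1\leq\lceil(gsr(\T A)-1)/|G|\rceil$, so the stated hypothesis on $n$ suffices.

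There is no real obstacle in this argument; the only subtlety worth checking carefully is that $\mathrm{id}\otimes\alpha$ genuinely has the Rokhlin property, which is why invoking \cref{lem: tensor_rokhlin} (rather than trying to verify \cref{defn: rokhlin} directly with the projections $1\otimes e_g$) is the cleanest route. Everything else is bookkeeping: applying the main Rokhlin theorem twice (once to $\alpha$ on $A$, once to $\mathrm{id}\otimes\alpha$ on $\T A$) and assembling the two outputs into the hypothesis of \cref{thm: k_1}.
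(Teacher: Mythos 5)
Your proposal is correct and follows essentially the same route as the paper: equip $C(\T)$ with the trivial $G$-action, invoke \cref{lem: tensor_rokhlin} to see that the product action on $\T A$ has the Rokhlin property, apply \cref{mainthm: rokhlin} to that action, and feed the resulting bounds (together with the bound on $csr(B)$ from \cref{mainthm: rokhlin} applied to $\alpha$ itself) into \cref{thm: k_1}. The only difference is that you spell out the identification $\T B\cong \T A\rtimes_{\mathrm{id}\otimes\alpha}G$ and the index bookkeeping, which the paper leaves implicit.
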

\begin{proof}
Let $\beta : G\to \text{Aut}(C(\T))$ be the trivial action. Then, by \cref{lem: tensor_rokhlin}, the action $\alpha\otimes \beta : G\to \text{Aut}(\T A)$ has the Rokhlin property, so the result follows from \cref{mainthm: rokhlin}. The final conclusion now follows from \cref{thm: k_1}.
\end{proof}

\begin{example}
Let $G$ be a finite group of order $k$ and $A$ be a UHF algebra of type $k^{\infty}$. Then $A$ admits a Rokhlin action $\alpha : G\to \text{Aut}(A)$ by \cite[Example 3.2]{izumi}, so the above corollary applies to the algebra $B := A\rtimes_{\alpha} G$. Since $A$ is divisible in the sense of \cite[Definition 4.1]{rieffel2}, and $tsr(A) = 1$, it follows from \cite[Corollary 4.12]{rieffel2} that $csr(\T A)\leq 2$. So by the above corollary, we conclude that the map
\[
GL_1(B)/GL_1^0(B) \to K_1(B)
\]
is an isomorphism. In fact, the same argument shows that 
\[
csr(\T^k B) \leq 2
\]
for all $k \in \N$. Hence, by \cite[Theorem 3.3]{rieffel2}, the natural inclusion map $\theta^B : GL_n(B)\to GL_{n+1}(B)$ induces a weak homotopy equivalence for all $n\geq 1$.
\end{example}
We end with an example that shows that \cref{mainthm: rokhlin} does not hold if the action does not satisfy the Rokhlin property.

\begin{example}
If $A = M_{2^{\infty}}$ denotes the UHF algebra of type $2^{\infty}$, then
\[
csr(A) = 1.
\]
If $G = \Z/2\Z$, then Blackadar has constructed in \cite{blackadar_car} an action of $G$ on $A$ such that $K_1(A\rtimes G) \neq \{0\}$. It follows by Property (9) of \cref{rem: gsr_csr_properties} that
\[
csr(A\rtimes G) > 1.
\]
Thus, \cref{mainthm: rokhlin} does not hold in this situation.

Furthermore, $tsr(A) = 1$, so by \cref{thm: csr_tsr_1}, we conclude that
\[
csr(A\rtimes G) = 2.
\]
Thus, this example also shows that the estimate in \cref{thm: csr_tsr_1} is sharp.
\end{example}

\subsection*{Acknowledgements}
The second named author was partially supported by SERB Grant YSS/2015/001060. The authors would like to thank the referee for a careful reading of the manuscript, and for suggesting changes that significantly improved the exposition.

\printbibliography
\end{document}